\numberwithin{equation}{section}
\numberwithin{figure}{section}
\theoremstyle{plain}
\newtheorem{thm}{\protect\theoremname}
  \theoremstyle{plain}
  \newtheorem{cor}[thm]{\protect\corollaryname}
  \theoremstyle{plain}
  \newtheorem{lem}[thm]{\protect\lemmaname}
  \theoremstyle{definition}
  \newtheorem{defn}[thm]{\protect\definitionname}
  \theoremstyle{remark}
  \newtheorem*{claim*}{\protect\claimname}
  \theoremstyle{definition}
  \newtheorem{problem}[thm]{\protect\problemname}
  \theoremstyle{plain}
  \newtheorem{conjecture}[thm]{\protect\conjecturename}
\newtheorem*{myConvention}{Convention}
  \providecommand{\claimname}{Claim}
  \providecommand{\conjecturename}{Conjecture}
  \providecommand{\corollaryname}{Corollary}
  \providecommand{\definitionname}{Definition}
  \providecommand{\lemmaname}{Lemma}
  \providecommand{\problemname}{Problem}
\providecommand{\theoremname}{Theorem}
\begin{document}

\author{Micha\l{} Adamaszek}

\address{Department of Mathematical Sciences, University of Copenhagen, Universitetsparken
5, 2100 Copenhagen, Denmark}

\thanks{MA is supported by VILLUM FONDEN through the network for Experimental
Mathematics in Number Theory, Operator Algebras, and Topology.}

\email{aszek@mimuw.edu.pl}

\author{Jan Hladký}

\address{Institute of Mathematics, Czech Academy of Science. Žitná 25, 110
00, Praha, Czech Republic. The Institute of Mathematics of the Czech
Academy of Sciences is supported by RVO:67985840.}

\thanks{The research leading to these results has received funding from the
People Programme (Marie Curie Actions) of the European Union's Seventh
Framework Programme (FP7/2007-2013) under REA grant agreement umber
628974}

\email{honzahladky@gmail.com}

\subjclass[2000]{05E45, 05C35}

\keywords{flag simplicial complex, $f$-vector, $\gamma$-vector, homology
manifold}

\title{Upper bound theorem for odd-dimensional flag triangulations of manifolds}
\begin{abstract}
We prove that among all flag triangulations of manifolds of odd dimension
$2r-1$ with sufficiently many vertices the unique maximizer of the
entries of the $f$-, $h$-, $g$- and $\gamma$-vector is the balanced
join of $r$ cycles. Our proof uses methods from extremal graph theory.
\end{abstract}

\maketitle

\section{Introduction}

The classification of face numbers ($f$-vectors) of various classes
of simplicial complexes, especially triangulations of spheres, balls
and manifolds, is a classical topic in enumerative combinatorics.
The Charney--Davis conjecture~\cite{CharneyDavis} and its generalization
by Gal~\cite{SRGal} sparkled the interest in similar questions for
the class of flag triangulations. In this paper we prove a general
upper bound theorem for flag triangulations of odd-dimensional manifolds.

A simplicial complex $K$ is \emph{flag} if every set of vertices
pairwise adjacent in the $1$-skeleton of $K$ spans a face of $K$
or, equivalently, if $K$ is the clique complex of its $1$-skeleton.
Flag complexes appear prominently in Gromov's approach to non-positive
curvature (see~\cite{Gromov} and~\cite{metricCharney} for an exposition).
In this context Charney and Davis proposed their famous conjecture~\cite{CharneyDavis}
that a certain linear combination of the face numbers of any odd-dimensional
flag triangulation of a homology sphere is non-negative. Subsequently,
Gal~\cite{SRGal} introduced a modification of the $f$-vector called
the $\gamma$-vector, which seems well-suited to the study of flag
triangulations of homology spheres, and is conjecturally non-negative.
Since then a number of conjectures have been made about the structure
of $\gamma$-vectors of flag triangulations of spheres, with many
of them verified in special cases~\cite{AdaHla:DenseFlagTriangulations,aisbett2014frankl,SRGal,karu2006cd,murai2012cd,nevo2011gamma,nevo2011vector}.
Note that a flag complex is completely determined by its $1$-skeleton,
and thus its face vector is the clique vector of the underlying graph.
Paradoxically, this only adds to the complexity of the problem. For
example, face vectors of arbitrary simplicial complexes are characterized
by the Kruskal--Katona theorem, while the clique vectors of general
graphs are not so well understood~\cite{andy:clique}.

Our contribution is an upper bound theorem for odd-dimensional flag
triangulations of homology manifolds, a class which includes flag
triangulations of simplicial manifolds and flag triangulations of
homology spheres. We exhibit a unique maximizer of any reasonable
linear combination of face numbers. For any $r\geq1$ and $n\geq4r$
let $\mathbf{\mathbf{J}}_{r}(n)$ be the $n$-vertex flag complex
obtained as a join of $r$ copies of the circle $S^{1}$, each one
a cycle with $\lfloor\frac{n}{r}\rfloor$ or $\lceil\frac{n}{r}\rceil$
vertices. This complex is a flag triangulation of a simplicial $(2r-1)$-sphere.
To phrase our main theorem we say that a real-valued function $F$
defined on simplicial complexes is a \emph{face function in dimension
$\ell$} if it can be written as $F(K)=c_{\ell}f_{\ell}(K)+c_{\ell-1}f_{\ell-1}(K)+\ldots+c_{0}f_{0}(K)+c_{-1}$
with $c_{i}\in\mathbb{R}$ and $c_{\ell}>0$, where $f_{i}(K)$ is
the number of $i$-dimensional faces of $K$.
\begin{thm}[Main theorem]
\label{thm:Main} For every even number $d\geq4$ and every face
function $F$ in dimension $\ell$, where $1\leq\ell\leq\frac{d}{2}-1$,
there exists a constant $n_{0}$ for which the following holds. If
$M$ is a flag triangulation of a homology manifold of dimension $d-1$
with $n\geq n_{0}$ vertices then 
\[
F(M)\leq F(\mathbf{J}_{\frac{d}{2}}(n))
\]
 and equality holds if and only if $M$ is isomorphic to $\mathbf{\mathbf{J}}_{\frac{d}{2}}(n)$.
\end{thm}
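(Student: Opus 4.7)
We plan to translate the problem to the language of the $1$-skeleton of $M$. Write $G$ for this $1$-skeleton; since $M$ is flag, $M$ is the clique complex of $G$ and $f_{i}(M)=k_{i+1}(G)$, where $k_{j}(G)$ denotes the number of $j$-cliques of $G$. The condition $\dim M=d-1$ forces $G$ to be $K_{d+1}$-free, while the homology manifold hypothesis means the link of every vertex is a flag homology $(d{-}2)$-sphere. We must show that, among such $G$ on $n$ vertices with $n$ large, the positive combination
\[
F(G)=c_{\ell}k_{\ell+1}(G)+\cdots+c_{0}k_{1}(G)+c_{-1},\qquad \ell+1\leq r:=d/2,
\]
is uniquely maximized by the $1$-skeleton of $\mathbf{J}_{r}(n)$.

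The first main step is a stability statement in the spirit of Erd\H{o}s--Simonovits. We would show that if $F(M)\geq (1-o(1))F(\mathbf{J}_{r}(n))$, then the vertex set of $G$ admits a partition into $r$ classes $V_{1},\ldots,V_{r}$ of sizes close to $n/r$, with almost all between-class edges present and few within-class edges, the latter approximating cycles. The argument would combine the $K_{d+1}$-free constraint with a counting/supersaturation lemma: among $K_{d+1}$-free graphs whose clique complex is a flag homology manifold, the only way to sustain as many $(\ell+1)$-cliques as $\mathbf{J}_{r}(n)$ is to organize the vertices into exactly $r$ roughly balanced parts with a flag $S^{1}$-triangulation inside each. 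Here we intend to reuse the sphere-case stability arguments from the authors' earlier paper~\cite{AdaHla:DenseFlagTriangulations}, bootstrapping to the manifold case via the observation that the approximate join-of-cycles structure on $G$ forces $M$ to be simply connected and hence actually a sphere.

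With an approximate join-of-cycles structure in hand, we would upgrade to exact equality by exploiting the rigidity of the homology manifold condition. For each vertex $v\in V_{i}$, $\mathrm{lk}_{M}(v)$ must be a flag homology $(d{-}2)$-sphere; combined with the near-complete join structure between the classes, this pins down the induced subgraph on $V_{i}$ as a single cycle. A final convexity argument forces the cycle lengths to differ by at most one, thereby identifying $M$ with $\mathbf{J}_{r}(n)$ up to isomorphism. The hardest step is expected to be the stability result: the unconstrained Tur\'an graph $T_{d}(n)$ has strictly more $(\ell+1)$-cliques than $\mathbf{J}_{r}(n)$, so the manifold constraint must be woven into the extremal argument rather than applied as an afterthought. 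This interplay between extremal graph theory, where the ``join of $r$ cycles'' is not a natural extremal object, and the topological rigidity imposed by the homology manifold condition is where we expect the bulk of the technical work to lie.
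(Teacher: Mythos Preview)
Your proposal correctly identifies the translation to clique-counting in the $1$-skeleton and the overall ``stability then rigidity'' architecture, but the crucial middle step is missing, and two of your proposed shortcuts do not work.

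\textbf{The gap in the stability step.} You use only that $G$ is $K_{d+1}$-free (i.e.\ $K_{2r+1}$-free), and you acknowledge that this alone cannot force proximity to a join of $r$ cycles, since $T_{2r}(n)$ beats $\mathbf{J}_{r}(n)$ in every clique count. But you do not say what replaces it. The paper's mechanism is this: the Dehn--Sommerville relation $h_{r+1}=h_{r-1}$ gives $e_{r+1}(G)=O(n^{r})=o(n^{r+1})$ (Lemma~\ref{lem:middleDS}); the Removal Lemma (Theorem~\ref{thm:Removal}) then deletes $o(n^{2})$ edges to obtain a genuinely $K_{r+1}$-free subgraph $G'$; Zykov's inequalities (Theorem~\ref{thm:Zykov}) transfer ``many $(\ell{+}1)$-cliques'' to ``many edges''; and Erd\H{o}s--Simonovits stability against $K_{r+1}$ (not $K_{2r+1}$) places $G'$, hence $G$, near $T_{r}(n)$. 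None of these three ingredients appears in your outline, and without them there is no route from the manifold hypothesis to the correct $r$-partite structure.

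\textbf{Two invalid shortcuts.} First, the plan to reuse the stability results of~\cite{AdaHla:DenseFlagTriangulations} does not cover the general case: that paper treats only $d=4$. Second, the bootstrap ``approximate join-of-cycles $\Rightarrow$ simply connected $\Rightarrow$ sphere'' is not available for homology manifolds, which need not be topological manifolds at all; in any case the paper never reduces to the sphere case. The actual structural refinement (Lemma~\ref{lem:obtainextremal-1}) uses instead the van~Kampen--Flores obstruction (Lemma~\ref{lem:vankampen}): vertex links, being homology $(2r{-}2)$-spheres, are $K_{3}^{r}$-free, and this is what forces each part $V_{i}$ to be triangle-free with maximum degree~$2$. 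Your ``rigidity'' paragraph gestures at using links but does not identify this obstruction, which is the only place topology enters the fine structural analysis.
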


In this context the standard statistics based on face numbers are
the $f$-vector $(f_{-1},f_{0},\ldots,f_{d-1})$, the $h$-vector
$(h_{0,}h_{1,}\ldots,h_{d})$, the $g$-vector $(g_{0},g_{1},\ldots,g_{\frac{d}{2}})$
and the $\gamma$-vector $(\gamma_{0},\gamma_{1},\ldots,\gamma_{\frac{d}{2}})$.
Theorem~\ref{thm:Main} specializes to an upper bound statement for
all of those simultaneously.
\begin{cor}
\label{cor:fhgamma}For every even number $d\geq4$ there is a constant
$N_{0}$ for which the following holds. If $M$ is a flag triangulation
of homology manifold of dimension $d-1$ with $n\geq N_{0}$ vertices
then
\begin{eqnarray*}
f_{i}(M)\leq f_{i}(\mathbf{J}_{\frac{d}{2}}(n)) & \mathrm{for} & 1\leq i\leq d-1,\\
h_{i}(M)\leq h_{i}(\mathbf{J}_{\frac{d}{2}}(n)) & \mathrm{for} & 2\leq i\leq d-2,\\
g_{i}(M)\leq g_{i}(\mathbf{J}_{\frac{d}{2}}(n)) & \mathrm{for} & 2\leq i\leq\tfrac{d}{2},\\
\gamma_{i}(M)\leq\gamma_{i}(\mathbf{J}_{\frac{d}{2}}(n)) & \mathrm{for} & 2\leq i\leq\tfrac{d}{2}.
\end{eqnarray*}
Moreover, equality in any of these inequalities implies that $M$
is isomorphic to $\mathbf{J}_{\frac{d}{2}}(n)$.
\end{cor}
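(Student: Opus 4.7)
The plan is to derive each of the four inequalities from Theorem~\ref{thm:Main} directly, reducing outside-range entries via the Dehn--Sommerville symmetry for odd-dimensional homology manifolds.

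First, I would show that each of $h_i$, $g_i$, and $\gamma_i$ with $2\leq i\leq\tfrac{d}{2}$ is a face function in dimension $i-1$. The classical relation $h_i=\sum_{j=0}^{i}(-1)^{i-j}\binom{d-j}{i-j}f_{j-1}$ writes $h_i$ as $f_{i-1}$ plus a linear combination of $f_{-1},\ldots,f_{i-2}$, since the leading coefficient equals $1$. The same structure is inherited by $g_i=h_i-h_{i-1}$ (because $h_{i-1}$ does not involve $f_{i-1}$), and by $\gamma_i$ via the Gal expansion $h_i=\sum_{j\leq i}\binom{d-2j}{i-j}\gamma_j$ (valid for $i\leq\tfrac{d}{2}$), which rearranges to $\gamma_i=h_i-\sum_{j<i}\binom{d-2j}{i-j}\gamma_j$ and inductively gives $\gamma_i=f_{i-1}+(\text{combination of }f_{-1},\ldots,f_{i-2})$. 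In each case $1\leq i-1\leq\tfrac{d}{2}-1$ lies in the scope of Theorem~\ref{thm:Main}, which supplies both the bound and the equality characterization. The entries $f_i$ with $1\leq i\leq\tfrac{d}{2}-1$ are themselves face functions in dimension $i$ and are handled by a direct application of the same theorem.

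Second, I would dispense with the high-index entries $f_i$ for $\tfrac{d}{2}\leq i\leq d-1$ and $h_i$ for $\tfrac{d}{2}+1\leq i\leq d-2$ by invoking the Dehn--Sommerville symmetry $h_i(M)=h_{d-i}(M)$, which is valid for any homology manifold of odd dimension $d-1$ since $\chi(M)=\chi(S^{d-1})=0$. For $h_i$ this immediately transfers the bound and the equality case from $h_{d-i}$, which lies in the range treated above. For $f_i$, substituting the symmetry into $f_i=\sum_{j=0}^{i+1}\binom{d-j}{i+1-j}h_j$ rewrites $f_i(M)$ as a nonnegative integer combination of $h_0(M),h_1(M),\ldots,h_{d/2}(M)$. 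Since $h_0=1$ and $h_1=n-d$ agree for $M$ and $\mathbf{J}_{d/2}(n)$, the inequality reduces to the previously established bounds on $h_2,\ldots,h_{d/2}$; and because the coefficient of at least one $h_j$ with $j\geq 2$ is strictly positive throughout this range (for instance that of $h_{d/2}$), equality forces the corresponding $h_j$-bound to be tight, whence $M\cong\mathbf{J}_{d/2}(n)$.

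The bulk of the content is contained in Theorem~\ref{thm:Main}; the only supplementary ingredient is the classical Dehn--Sommerville symmetry, and the only minor obstacle is the bookkeeping required to verify strict positivity of at least one $h_j$-coefficient in the reduction of $f_i$, which is a routine binomial calculation.
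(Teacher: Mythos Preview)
Your proposal is correct and essentially matches the paper's approach: show that each relevant $h_i$, $g_i$, $\gamma_i$, and low-index $f_i$ is a face function in an admissible dimension and apply Theorem~\ref{thm:Main}, then handle the high-index entries via the Dehn--Sommerville symmetry $h_i=h_{d-i}$. The only cosmetic difference is that for $f_i$ with $i\geq d/2$ the paper observes that $f_i$ is itself a face function in dimension $\tfrac{d}{2}-1$ (with leading term $\binom{d/2}{\,i+1-d/2\,}h_{d/2}$) and applies Theorem~\ref{thm:Main} once, rather than combining the individual $h_j$-bounds as you do.
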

For all other values of the index $i$, as well as for face functions
in dimension $0$ or $-1$ in Theorem~\ref{thm:Main}, the corresponding
inequalities are trivially satisfied with equality for all $M$. We
note that all entries of the $f$-, $h$- and $g$-vectors are linear
combinations of the entries of the $\gamma$-vector with \emph{non-negative
}coefficients (see next section), therefore the inequalities for the
$\gamma$-vector imply all the remaining ones. 

The only previously known case of Corollary~\ref{cor:fhgamma} was
$d=4$ (for any $n$) due to Gal~\cite{SRGal}, with the uniqueness
part (for large $n$) following from~\cite{AdaHla:DenseFlagTriangulations}.
In this case all inequalities follow from $f_{1}(M)\leq f_{1}(\mathbf{J}_{2}(n))$.
Lutz and Nevo~\cite[Conjecture 6.3]{nevolutz} conjectured the upper
bound for the $\gamma$-vector for flag triangulations of odd-dimensional
spheres. Our result confirms this conjecture for large $n$. It also
provides supporting evidence for a question of Nevo and Petersen~\cite[Problem 6.4]{nevo2011gamma}
(see Section~\ref{sec:Conjectures} for details). We also previously
conjectured the upper bound on $f_{1}$ for arbitrary even $d$ in~\cite{AdaHla:DenseFlagTriangulations}. 

For arbitrary (not necessarily flag) triangulations of odd-dimensional
homology manifolds tight upper bounds for $f_{i}$ were obtained by
Novik~\cite[Theorem 1.4]{NovikUBC}. In this case the maximum is
attained by the boundary of the $d$-dimensional cyclic polytope with
$n$ vertices (the maximizer is not unique). For the subclass of simplicial
spheres this had been known before by the celebrated upper bound theorem
of Stanley~\cite{stanley1975upper}. In the flag case our result
is new even for flag triangulations of simplicial spheres.

\section{Preliminaries}

We recommend the reader~\cite{stanley2004combinatorics} and~\cite{NovikUBC}
as references for face numbers of triangulations of manifolds and
spheres.

An \emph{abstract simplicial complex }$K$ with vertex set $V$ is
a collection $K\subseteq2^{V}$ such that $\sigma\in K$ and $\tau\subseteq\sigma$
imply $\tau\in K$. The elements of $K$ are called \emph{faces. }The
\emph{dimension} of $\sigma$ is $|\sigma|-1$ and the dimension of
$K$ is the maximal dimension of any of its faces. The \emph{link
}of a face $\sigma$ is the subcomplex $\mathrm{lk}_{K}(\sigma)=\{\tau\in K\colon\tau\cap\sigma=\emptyset,\ \tau\cup\sigma\in K\}$. 

A simplicial complex $K$ is a \emph{simplicial manifold }(resp. \emph{simplicial
sphere) }of dimension $q$ if the geometric realization $|K|$ is
homeomorphic to a connected, compact topological $q$-manifold without
boundary (resp. to the sphere $S^{q}$). Most known results involving
face numbers of simplicial manifolds hold for more general objects,
which we now introduce. A simplicial complex $K$ is a \emph{homology
manifold }if for any point $p\in|K|$ and any $i\neq\dim K$, $H_{i}(|K|,|K|-p;\mathbb{Z})=0$
and $H_{\dim K}(|K|,|K|-p;\mathbb{Z})=\mathbb{Z}$ (throughout this
paper, all homology groups are taken with integer coefficients). This
is equivalent to saying that for every nonempty face $\sigma\in K$
the link $\mathrm{\mathrm{lk}}_{K}(\sigma)$ has the homology of the
sphere $S^{q-|\sigma|}$ (equivalence follows from the excision axiom,
see \cite[Lemma 3.3]{munkres1984}). A \emph{homology sphere }is a
homology manifold $K$ such that $K$ itself has the homology of a
sphere. It is easy to see that if $K$ is a homology $q$-manifold
then for every nonempty face $\sigma\in K$ the link $\mathrm{\mathrm{lk}}_{K}(\sigma)$
is a homology $(q-|\sigma|)$-sphere. Clearly every simplicial manifold
(resp. simplicial sphere) is a homology manifold (resp. homology sphere).

A complex $K$ of dimension $q$ is called \emph{Eulerian }if for
every face $\sigma\in K$ (including the empty one) the link $\mathrm{lk}_{K}(\sigma)$
has the same Euler characteristic as the sphere $S^{q-|\sigma|}$.
Every homology manifold satisfies Poincaré duality; as a consequence
the Euler characteristic of an odd-dimensional homology manifold $M$
equals $0$ and so $M$ is Eulerian.

For a $(d-1)$-dimensional complex $K$ with $n$ vertices let $f_{i}(K)$
be the number of $i$-dimensional faces. The vector $(f_{-1},f_{0},\ldots,f_{d-1})$
is called the \emph{$f$-vector }of $K$ (note that $f_{-1}(K)=1$
and $f_{0}(K)=n$). The $h$-vector $(h_{0},h_{1},\ldots,h_{d})$
of $K$ is a convenient modification of the $f$-vector defined by
the identity 
\begin{equation}
\sum_{i=0}^{d}f_{i-1}x^{i}(1-x)^{d-i}=\sum_{i=0}^{d}h_{i}x^{i}.\label{eq:hvector}
\end{equation}
Note $h_{0}(K)=1$ and $h_{1}(K)=n-d$. An Eulerian simplicial complex
satisfies the Dehn--Sommerville equations $h_{i}(K)=h_{d-i}(K)$ for
$0\leq i\leq d$. In that case one can define the $\gamma$-vector
$(\gamma_{0},\ldots,\gamma_{\lfloor\frac{d}{2}\rfloor})$ of $K$
by the identity 
\begin{equation}
\sum_{i=0}^{d}h_{i}x^{i}=\sum_{i=0}^{\lfloor d/2\rfloor}\gamma_{i}x^{i}(x+1)^{d-2i}.\label{eq:gammavector}
\end{equation}
Here $\gamma_{0}(K)=1$ and $\gamma_{1}(K)=n-2d$. The $\gamma$-vector
was first introduced by Gal~\cite{SRGal} for flag triangulation
of homology spheres, for which it is conjectured to be non-negative.
This conjecture generalizes the Charney--Davis conjecture, which in
this language asserts that $\gamma_{\frac{d}{2}}(K)$ is non-negative
for a $(d-1)$-dimensional flag triangulation of a homology sphere
$K$ with $d$ even. Another classical invariant, studied mostly for
simplicial spheres and balls, is the $g$-vector $(g_{0},g_{1},\ldots,g_{\lfloor\frac{d}{2}\rfloor})$
given by $g_{0}=1$ and $g_{i}=h_{i}-h_{i-1}$ for $1\leq i\leq\lfloor\frac{d}{2}\rfloor$.

Suppose now that $d$ is even and let $M$ be a homology $(d-1)$-manifold.
For any $i$ the function $f_{i}(M)$ is clearly a face function in
dimension $i$. For any $0\leq i\leq d$ we have $h_{i}=\sum_{j=0}^{i}(-1)^{j-i}{d-j \choose i-j}f_{j-1}$,
so $h_{i}(M)$ is a face function in dimension $i-1$. By the Dehn--Sommerville
equations if $\frac{d}{2}\leq i\leq d$ then $h_{i}(M)=h_{d-i}(M)$
can be expressed as a face function in dimension $d-i-1$. For any
$0\leq i\leq d-1$ we have $f_{i}=\sum_{j=0}^{i+1}{d-j \choose i+1-j}h_{j}$.
If $\frac{d}{2}\leq i\leq d-1$ the Dehn--Sommerville equations imply
that $f_{i}$ is a linear combination of $(h_{\frac{d}{2}},\ldots,h_{0})$
with leading term ${d/2 \choose i+1-d/2}h_{\frac{d}{2}}$, so by the
previous observations $f_{i}(M)$ is equal to a face function in dimension
$\frac{d}{2}-1$. Finally both $\gamma_{i}$ and $g_{i}$ are linear
combinations of $(h_{i},\ldots,h_{0})$ with leading term $h_{i}$,
hence $\gamma_{i}(M)$ and $g_{i}(M)$ are face functions in dimension
$i-1$. Using these observations the proof of Corollary~\ref{cor:fhgamma}
from Theorem~\ref{thm:Main} is immediate.

\medskip{}

Let us now move towards flag complexes. If $G=(V,E)$ is a finite,
simple, undirected graph then the \emph{clique number} $\omega=\omega(G)$
of $G$ is the cardinality of the largest clique (complete subgraph)
in $G$ and the \emph{clique vector }of $G$ is the sequence $(e_{0}(G),e_{1}(G),\ldots,e_{\omega}(G))$,
where $e_{i}(G)$ is the number of cliques of cardinality $i$ (in
particular $e_{0}(G)=1$, $e_{1}(G)=|V|$ and $e_{2}(G)=|E|$). The
\emph{clique complex }of $G$, denoted $X(G)$, is the simplicial
complex with vertex set $V$ whose faces are all cliques in $G$.
We have $\dim X(G)=\omega(G)-1$ and $f_{i}(X(G))=e_{i+1}(G)$. Note
that the $1$-skeleton of $X(G)$ is $G$. A simplicial complex is
\emph{flag }if it is the clique complex of a graph. A \emph{flag triangulation
of a homology manifold }(resp. \emph{flag triangulation of a homology
sphere) }is a flag complex which is a triangulation of a homology
manifold (resp. a homology sphere).

\medskip{}

\begin{myConvention}By abuse of language we will say that \emph{$G$
triangulates a homology manifold }(resp.\emph{ sphere}) if $X(G)$
is a flag triangulation of a homology manifold (resp. sphere).\end{myConvention}\medskip{}

Fix $n,r\in\mathbb{N}$. We write $T_{r}(n)$ for the $r$-partite
Turán graph of order $n$, that is a graph with $n$ vertices partitioned
into sets $V_{1},V_{2},\ldots,V_{r}$, each of size either $\lfloor\frac{n}{r}\rfloor$
or $\lceil\frac{n}{r}\rceil$, with no edge inside any $V_{i}$ and
with a complete bipartite graph between every two $V_{i}$ and $V_{j}$,
$i\neq j$. Further, for $n\geq4r$ we define $J_{r}(n)$ to be the
graph obtained from $T_{r}(n)$ by declaring that each of the parts
$V_{i}$ induces a cycle of length $|V_{i}|$. The condition $n\geq4r$
guarantees that each part is a cycle of length at least $4$, hence
a flag triangulation of $S^{1}$. Of course we have $\mathbf{J}_{r}(n)=X(J_{r}(n))$
and this complex is a flag simplicial $(2r-1)$-sphere.

We say that a real-valued function $F$ defined on graphs is a \emph{clique
function of order $k$}, if $F$ can be written as $F(G)=c_{k}e_{k}(G)+c_{k-1}e_{k-1}(G)+\cdots+c_{1}e_{1}(G)+c_{0}$
where $c_{i}\in\mathbb{R}$ and $c_{k}>0$. Theorem~\ref{thm:Main}
can be equivalently rephrased as follows.
\begin{thm}[Main Theorem, Graph formulation]
\label{thm:MainGraphTheory} For every $r\geq2$ and every clique
function $F$ of order $k$, where $2\leq k\leq r$, there exists
a constant $n_{0}$ for which the following holds. If $G$ is a graph
with $n\geq n_{0}$ vertices which triangulates a $(2r-1)$-dimensional
homology manifold then 
\[
F(G)\leq F(J_{r}(n))
\]
 and equality holds if and only if $G$ is isomorphic to $J_{r}(n)$.
\end{thm}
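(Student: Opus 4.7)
I would proceed by induction on $r$, the case $r=2$ being the upper bound theorem of~\cite{AdaHla:DenseFlagTriangulations}. The first reduction is to the top clique count: every lower-order term in $F$ satisfies $|e_{j}(J_{r}(n))-e_{j}(G)|=O(n^{j})=O(n^{k-1})$ for $j<k$, so if one can show a gap of at least $\Omega(n^{k-1})$ between $e_{k}(J_{r}(n))$ and $e_{k}(G)$ for every $G\not\cong J_{r}(n)$, then for $n$ sufficiently large the leading contribution $c_{k}(e_{k}(J_{r}(n))-e_{k}(G))$ dominates the sum of lower-order discrepancies and forces $F(G)<F(J_{r}(n))$. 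The substantive task is therefore a Tur\'an-type inequality for $e_{k}$, together with a matching stability/rigidity statement off-isomorphism.

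The clique inequality itself I would attack via edge links. For any edge $e\in G$ the link $\mathrm{lk}_{X(G)}(e)$ is a flag triangulation of a $(2r-3)$-dimensional homology sphere, so the induction hypothesis for $r-1$ applies to it. Combining this with the double-counting identity $\binom{k}{2}e_{k}(G)=\sum_{e\in E(G)}e_{k-2}(\mathrm{lk}(e))$ and a convexity step to handle the variation in link sizes yields an asymptotic estimate $e_{k}(G)\le(1+o(1))\binom{r}{k}(n/r)^{k}$, matching the leading term of $e_{k}(J_{r}(n))$.

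With the asymptotic in place, I would extract a stability statement in the style of Erd\H{o}s--Simonovits: if $e_{k}(G)$ lies within $o(n^{k})$ of the maximum, then there exists a partition $V(G)=V_{1}\sqcup\cdots\sqcup V_{r}$ with $|V_{i}|=(1\pm o(1))n/r$ such that almost all across-part pairs are edges of $G$ and each part spans few edges beyond a near-Hamiltonian cycle. The last step is to upgrade this approximate picture to the exact isomorphism $G\cong J_{r}(n)$ by exploiting manifold rigidity: any residual defect---a missing across-part edge, an extra chord inside some $V_{i}$, or a non-cyclic within-part structure---must be shown to create either an edge link that violates the inductive bound by a positive margin, or a vertex link that is inconsistent with a flag $(2r-2)$-dimensional homology sphere.

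The main obstacle I anticipate is this final rigidification step. The Tur\'an-type asymptotic and its stability are relatively standard extremal machinery once the appropriate link identities are in place, but upgrading a near-$J_{r}(n)$ configuration to an exact isomorphism using only the topological constraint that every link is a homology sphere requires a delicate enumeration of local defects and a sharp analysis of their effect on link structure. Vertex links are particularly delicate because they are even-dimensional and thus fall outside the main inductive setup; handling them will likely require a secondary inductive step on edge links nested inside vertex links, or direct use of flag homology sphere classification in low dimension together with enumeration of the finitely many candidate perturbations that survive the stability step.
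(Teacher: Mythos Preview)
Your proposed reduction to the top clique count is flawed. You want to show $e_k(J_r(n)) - e_k(G) = \Omega(n^{k-1})$ for every $G \not\cong J_r(n)$, but no such uniform gap exists. Take $n = rq$ and $G = C_{q+1} * C_{q-1} * C_q * \cdots * C_q$; this is a flag simplicial $(2r-1)$-sphere not isomorphic to $J_r(n)$, yet the clique-count formula for a join of cycles (equation~\eqref{eq:MichaelJacksonIsMyBigHero}) together with the identity $\sigma_j(q,\ldots,q) - \sigma_j(q{+}1,q{-}1,q,\ldots,q) = \sigma_{j-2}(q,\ldots,q)$ gives $e_k(J_r(n)) - e_k(G) = \Theta(n^{k-2})$. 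Even if an $\Omega(n^{k-1})$ gap did hold, it would not ``dominate'' the lower-order discrepancies $|e_j(J_r(n)) - e_j(G)|$, which can themselves be of order $n^{k-1}$; the implied constants do not cooperate for arbitrary coefficients $c_0,\ldots,c_{k-1}$, and since both sides scale like $n^{k-1}$ no choice of $n_0$ fixes this. The paper never makes this reduction: after a stability step it optimizes the \emph{full} functional $F$ over an explicit near-extremal class (Lemma~\ref{lem:optimizeExtremal}), carrying all the coefficients $c_0,\ldots,c_k$ through each local-modification argument.

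The paper's path to stability and rigidity is also structurally different from your outline. Since $G$ contains copies of $K_{2r}$ it is nowhere near $K_{r+1}$-free, so Erd\H{o}s--Simonovits cannot be invoked directly; the paper uses the middle Dehn--Sommerville relation to get $e_{r+1}(G)=O(n^r)=o(n^{r+1})$ and then the Removal Lemma (Theorem~\ref{thm:Removal}) to pass to a genuinely $K_{r+1}$-free subgraph close to $G$, after which Zykov's inequality and stability apply. The rigidification is driven by the van Kampen--Flores non-embeddability of $K_3^r$ in $S^{2(r-1)}$ (Lemma~\ref{lem:vankampen}), which rules out local defects in links directly, rather than by any inductive link inequality. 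Your induction on edge links could plausibly recover the asymptotic $e_k(G)\leq(1+o(1))\binom{r}{k}(n/r)^k$, but the proposal supplies no concrete obstruction---analogous to van Kampen--Flores---to upgrade near-$J_r(n)$ structure to exact isomorphism, and as noted above the reduction that was meant to make that upgrade unnecessary does not hold.
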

Let us first fix some additional notation. The \emph{neighborhood}
of a vertex $v$ in a graph $G$ is the set $N_{G}(v)=\{w\colon\left\{ v,w\right\} \in E(G)\}$
and for a clique $\sigma$ in $G$ we define the \emph{link of $\sigma$
in $G$ }as the induced subgraph $\mathrm{lk}_{G}(\sigma)=G[\bigcap_{v\in\sigma}N_{G}(v)]$.
This notation is designed so that $X(\mathrm{lk}_{G}(\sigma))=\mathrm{lk}_{X(G)}(\sigma)$.
For a vertex $v\in V(G)$ and a subset $W\subset V(G)$ we write $\deg_{G}(v)=|N_{G}(v)|$
and $\deg_{G}(v,W)=|N_{G}(v)\cap W|$. The subscript $G$ will be
omitted if there is no risk of confusion.

\subsection{Properties of flag triangulations of homology manifolds}

Below, we record two basic properties of flag triangulations of homology
manifolds that we need for our proof of the Main Theorem.
\begin{lem}
\label{lem:middleDS}For every $r\geq1$ there is a constant $C_{r}$
such that every $n$-vertex graph $G$ triangulating a $(2r-1)$-dimensional
homology manifold satisfies $e_{r+1}(G)\leq C_{r}n^{r}$.\end{lem}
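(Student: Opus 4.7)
The plan is to rewrite $e_{r+1}(G)$ as a linear combination, depending only on $r$, of the lower clique counts $e_0(G),\ldots,e_r(G)$, and then to apply the trivial bound $e_i(G)\leq\binom{n}{i}$. The only substantial input is a single Dehn--Sommerville equation. Set $M:=X(G)$ and $d:=2r$. Since $d-1$ is odd, $M$ is Eulerian (as recalled in the preliminaries), so the Dehn--Sommerville relations $h_i(M)=h_{d-i}(M)$ hold for $0\leq i\leq d$.

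Inverting the defining identity~\eqref{eq:hvector} in both directions yields the standard universal formulae that express $f_{i-1}$ as a linear combination of $h_0,\ldots,h_i$ and, conversely, each $h_j$ as a linear combination of $f_{-1},\ldots,f_{j-1}$. Applying the first formula with $i=r+1$ expresses $e_{r+1}(G)=f_r(M)$ as a linear combination of $h_0(M),\ldots,h_{r+1}(M)$. The only summand not immediately controlled by lower face numbers is $h_{r+1}(M)$, but Dehn--Sommerville gives $h_{r+1}(M)=h_{r-1}(M)$, so after substitution $e_{r+1}(G)$ is a linear combination of $h_0(M),\ldots,h_r(M)$ with coefficients depending only on $r$. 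Using the second formula to replace each $h_j(M)$, for $j\leq r$, by the corresponding linear combination of $e_0(G),\ldots,e_j(G)$, I obtain an identity
\[
e_{r+1}(G)\;=\;\sum_{i=0}^{r}c_i(r)\,e_i(G)
\]
in which the coefficients $c_i(r)\in\mathbb{R}$ depend only on $r$.

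Finally, for $0\leq i\leq r$ the trivial estimate $e_i(G)\leq\binom{n}{i}\leq n^r$ holds, and combining this with $e_{r+1}(G)\geq 0$ gives
\[
e_{r+1}(G)\;\leq\;\sum_{i=0}^{r}|c_i(r)|\,e_i(G)\;\leq\;\Bigl(\sum_{i=0}^{r}|c_i(r)|\Bigr)n^r,
\]
so one may take $C_r:=\sum_{i=0}^{r}|c_i(r)|$. The argument is essentially bookkeeping with the $f$- and $h$-vectors and there is no real obstacle. The manifold hypothesis enters the proof in exactly one place, namely the single Dehn--Sommerville equation $h_{r+1}(M)=h_{r-1}(M)$, which trades the ``middle'' entry $h_{r+1}$ (whose expansion in face numbers already involves $f_r=e_{r+1}$ itself, and is therefore not a priori bounded) for the strictly lower entry $h_{r-1}$.
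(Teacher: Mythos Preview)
Your proof is correct and follows essentially the same approach as the paper: use the single Dehn--Sommerville relation $h_{r+1}=h_{r-1}$ to rewrite $f_r=e_{r+1}$ as a linear combination (with coefficients depending only on $r$) of $e_0,\ldots,e_r$, then bound each $e_i$ by $\binom{n}{i}\le n^r$. The paper states the passage from $h_{r+1}=h_{r-1}$ to the linear expression for $f_r$ in one sentence, whereas you spell out the intermediate conversion through the $h$-vector; the content is the same.
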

\begin{proof}
The Dehn--Sommerville relation $h_{r+1}(X(G))=h_{r-1}(X(G))$ expressed
in terms of face numbers implies that $f_{r}(X(G))$ is a linear combination
of entries of the vector $(f_{r-1}(X(G)),\ldots,f_{-1}(X(G)))=(e_{r}(G),\ldots,e_{0}(G))$
with coefficients depending only on $r$. Since $e_{i}(G)\leq{n \choose i}\leq n^{r}$
for $0\leq i\leq r$, we get $e_{r+1}(G)=f_{r}(X(G))\leq C_{r}n^{r}$
for a suitable $C_{r}.$
\end{proof}

Let $K_{3}^{r}:=T_{r}(3r)$ denote the complete $r$-partite graph
with all parts of size $3$. A graph $G$ is $H$-free if it does
not contain $H$ as a subgraph. The crucial geometric ingredient of
our arguments is provided by the next lemma.
\begin{lem}
\label{lem:vankampen}Fix $r\geq1$. If $G$ triangulates a homology
sphere of dimension $2(r-1)$ then $G$ is $K_{3}^{r}$-free.\end{lem}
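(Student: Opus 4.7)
Assume for contradiction that $G$ contains $K_3^r$ as a subgraph, with the standard tripartition $V_1,\ldots,V_r$ into three-element parts. Since $X(G)$ is a flag complex, every clique of this subgraph is already a face of $X(G)$, so the clique complex of $K_3^r$ sits inside $X(G)$ as a subcomplex. That clique complex is nothing but the $r$-fold simplicial join $[3]^{*r}$ of three-point discrete sets, an $(r-1)$-dimensional complex. We therefore obtain a PL embedding of $|[3]^{*r}|$ into $|X(G)|$, which by hypothesis is a homology $(2r-2)$-sphere.

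The plan is to rule out such an embedding via an extension of the Van Kampen--Flores theorem. Classically, that theorem asserts that $[3]^{*r}$ admits no topological embedding into $S^{2r-2}$. The proof uses the $\mathbb Z_2$-equivariant deleted product: an embedding $K\hookrightarrow S^{2r-2}$ of $K=[3]^{*r}$ would produce a $\mathbb Z_2$-equivariant map $K^{\times 2}_\Delta\to (S^{2r-2})^{\times 2}_\Delta$, and for our specific $K$ this is obstructed by the non-vanishing of the Van Kampen class $\mathfrak{o}(K)\in H^{2r-2}_{\mathbb Z_2}\bigl(K^{\times 2}_\Delta;\tilde{\mathbb Z}\bigr)$. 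What we need is the analogous non-embedding statement with a general closed homology $(2r-2)$-sphere $M$ in place of $S^{2r-2}$.

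This extension is obtained by pulling the obstruction back along the equivariant inclusion $K^{\times 2}_\Delta\hookrightarrow M^{\times 2}_\Delta$ and checking that the pulled-back class remains non-zero. The main obstacle, and where the homology-sphere hypothesis enters essentially, is the computation of the top-degree $\mathbb Z_2$-equivariant cohomology of $M^{\times 2}_\Delta$: via Alexander duality applied to the diagonal $\Delta\subset M\times M$, the deleted product of a homology $n$-sphere carries the same relevant cohomological data as in the classical case $M=S^n$. Once this identification is in place, the Van Kampen--Flores non-vanishing of $\mathfrak{o}([3]^{*r})$ transfers to the homology-sphere setting, producing the desired contradiction.
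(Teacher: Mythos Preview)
Your approach is genuinely different from the paper's, and the difference is worth spelling out.

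The paper does \emph{not} attempt to generalize the van Kampen--Flores obstruction to homology spheres. Instead it invokes a theorem of Galewski and Stern: if $X(G)$ is a homology $2(r-1)$-sphere then the double suspension $\Sigma^{2}X(G)$ is homeomorphic to the genuine sphere $S^{2r}$. A copy of $K_{3}^{r}$ in $G$, together with any three of the four suspension vertices, then yields an embedded $X(K_{3}^{r+1})=[3]^{*(r+1)}$ inside $S^{2r}$, contradicting the classical van Kampen--Flores theorem. So the paper reduces to the honest-sphere case rather than extending the obstruction theory. This buys a short, black-box argument at the cost of quoting a deep result (Galewski--Stern).

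Your direct route is attractive in principle, but the sketch as written has a gap. You write that one ``pulls the obstruction back along the equivariant inclusion $K^{\times2}_{\Delta}\hookrightarrow M^{\times2}_{\Delta}$ and checks that the pulled-back class remains non-zero''. But $\mathfrak{o}(K)$ already lives on $K^{\times2}_{\Delta}$; showing that it lies in the image of the restriction from $M^{\times2}_{\Delta}$ is \emph{consistent} with an embedding, not contradictory. The classical argument works because an embedding into $\mathbb{R}^{2r-2}$ produces an equivariant map $K^{\times2}_{\Delta}\to S^{2r-3}$ via the Gauss map, and $H^{2r-2}_{\mathbb{Z}_2}(S^{2r-3};\tilde{\mathbb{Z}})=0$ forces $\mathfrak{o}(K)=0$. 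For $M=S^{2r-2}$ one removes a point to get back to $\mathbb{R}^{2r-2}$; for a homology sphere that is not a topological manifold no such reduction is available, and $M^{\times2}_{\Delta}$ need not admit an equivariant map to $S^{2r-3}$. Your appeal to ``Alexander duality applied to the diagonal $\Delta\subset M\times M$'' is also imprecise: $M\times M$ is not a sphere, so one would need Poincar\'e--Lefschetz duality for homology manifolds, and then a genuinely equivariant refinement, together with an identification of \emph{which} class on $M^{\times2}_{\Delta}$ must vanish. None of this is supplied. A corrected version of your argument may well exist, but it would require substantially more work than the two sentences you allot it; the paper's suspension trick sidesteps all of this.
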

\begin{proof}
By a result of Galewski and Stern~\cite[Corollary 1.9]{galewski1980classification},
if $X(G)$ is a homology $2(r-1)$-sphere then the double suspension
$\Sigma^{2}X(G)$ is homeomorphic to $S^{2r}$. Now if $G$ contained
$K_{3}^{r}$ then $\Sigma^{2}X(G)$ would contain an embedded $X(K_{3}^{r+1})$,
formed by the original $K_{3}^{r}$ and any three of the four suspending
vertices. That contradicts the theorem of van Kampen and Flores \cite{vanKampen1932,flores1933}
(see also~\cite[Section 2.4]{wagner:minors}) that $X(K_{3}^{r+1})$
is not embeddable in $S^{2r}$. 
\end{proof}
In our arguments we are going to apply Lemma~\ref{lem:vankampen}
to links of faces in a homology manifold. For example, we get that
if $G$ triangulates a homology $(2r-1)$-manifold then for every
vertex $v$ the link $\mathrm{lk}_{G}(v)$ is $K_{3}^{r}$-free.

\subsection{Extremal graph theory}

The remaining tools for our proof come entirely from extremal graph
theory. An approach to face enumeration via extremal graph theory
was pioneered in~\cite{AdaHla:DenseFlagTriangulations} where we
classified all flag triangulations of homology $3$-manifolds $M$
with a sufficiently large number $n$ of vertices which are almost
extremal for $f_{1}$ or $\gamma_{2}$. Thus, the main technical contribution
of our current work is in connecting further tools from extremal graph
theory (namely Zykov's inequalities (Theorem~\ref{thm:Zykov}) and
the Removal lemma (Theorem~\ref{thm:Removal})) to the area of face
enumeration.

\medskip{}

The following definition introduces a distance --- sometimes called
the \emph{edit distance} --- on the set of $n$-vertex graphs.
\begin{defn}
\label{def:distance}We say that two graphs with the same number of
$n$ vertices are \emph{$\epsilon$-close} if there exists an identification
of their vertex sets, so that then one graph can be obtained from
the other by editing (i.e., adding or deleting) less than $\epsilon n^{2}$
edges.
\end{defn}
The celebrated Stability Theorem of Erd\H{o}s and Simonovits~\cite{Erd:Stability,Sim:Stability}
below says that a $K_{r+1}$-free graph whose number of edges is close
to the Turán bound must actually be close to the Turán graph in the
edit distance.
\begin{thm}
\label{thm:ErdSimStability}Suppose that $r\geq2$ and $\epsilon>0$
are given. Then there exists $\delta>0$ such that whenever $H$ is
an $n$-vertex, $K_{r+1}$-free graph with $e_{2}(H)>(1-\delta)e_{2}(T_{r}(n))$
then $H$ is $\epsilon$-close to $T_{r}(n)$.
\end{thm}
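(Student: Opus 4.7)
The plan is to prove Theorem~\ref{thm:ErdSimStability} by induction on $r$, using Tur\'an's theorem together with a local-to-global structural analysis. For the base case $r=2$, where $H$ is triangle-free with at least $(1-\delta)\lfloor n^2/4\rfloor$ edges, I would first show by a convexity/averaging argument that all but $o(n)$ vertices have degree within $o(n)$ of $n/2$. Pick any such high-degree vertex $v$; since $H$ is triangle-free, $N(v)$ is independent, so $(N(v),V(H)\setminus N(v))$ is a candidate bipartition with each part of size close to $n/2$. A direct edge count, comparing the given lower bound on $e_2(H)$ against the maximum number of edges across the partition plus those inside each part, shows that at most $O(\delta n^2)$ edges must be added or deleted to turn $H$ into $T_2(n)$.

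For the inductive step, I first locate a vertex $v$ with $\deg(v)\ge(1-1/r-\delta')n$: the average-degree condition provided by the hypothesis forces many such vertices to exist, with $\delta'\to 0$ as $\delta\to 0$. Since $H$ is $K_{r+1}$-free, $H[N(v)]$ is $K_r$-free, and bounding the edges of $H$ inside $N(v)$, between $N(v)$ and its complement, and inside the complement against the Tur\'an threshold forces $e_2(H[N(v)])$ to be within a $\delta''$-fraction of the $K_r$-free maximum on $|N(v)|$ vertices. The inductive hypothesis, applied to $H[N(v)]$ with a smaller parameter $\epsilon'$, gives a partition $V_1,\ldots,V_{r-1}$ of $N(v)$ into nearly-balanced, nearly-independent parts that are nearly complete between one another. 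Setting $V_r:=V(H)\setminus N(v)$ produces a candidate $r$-partition of $V(H)$.

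It then remains to verify that $(V_1,\ldots,V_r)$ is essentially the Tur\'an partition of $H$. For each $u\in V_r$, the set $N(u)\cap N(v)$ must be $K_{r-1}$-free in a graph very close to $T_{r-1}(|N(v)|)$; otherwise one can embed a copy of $K_{r-1}$ (using the near-Tur\'an structure plus a standard robustness argument), which together with $u$ and $v$ would give a $K_{r+1}$ in $H$. This forces $N(u)\cap N(v)$ to avoid, up to $o(n)$ exceptions, one distinguished part $V_{i(u)}$. Reassigning each $u\in V_r$ to $V_{i(u)}$ and discarding the $o(n)$ vertices for which no part is clearly avoided produces the refined $r$-partition. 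A final accounting using the overall edge deficit $\delta\,e_2(T_r(n))$ and the extremality of $T_r(n)$ shows that $H$ is $\epsilon$-close to $T_r(n)$. The main obstacle is the bookkeeping of nested error parameters: the inductive parameter $\epsilon'$, the high-degree threshold $\delta'$, the Tur\'an deficit $\delta''$ in $N(v)$, and the allowance for ``bad'' vertices of $V_r$ must all be calibrated so that their cumulative contribution to the edit distance stays below $\epsilon n^2$, which ultimately fixes $\delta$ as a (rapidly decreasing) function of $\epsilon$ and $r$.
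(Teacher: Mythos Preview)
The paper does not prove Theorem~\ref{thm:ErdSimStability}; it is quoted as the classical Stability Theorem of Erd\H{o}s and Simonovits, with references to the original sources, and then used as a black box. There is therefore no proof in the paper to compare your argument against.

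As for the argument itself, the inductive step contains a genuine error. For $u\in V_r=V(H)\setminus N(v)$ you assert that $N(u)\cap N(v)$ is $K_{r-1}$-free, on the grounds that a copy of $K_{r-1}$ there ``together with $u$ and $v$ would give a $K_{r+1}$ in $H$''. But $u\notin N(v)$, so $u$ and $v$ are \emph{not} adjacent; a clique $K_{r-1}\subseteq N(u)\cap N(v)$ together with $u$ and $v$ yields only $K_{r+1}$ minus the edge $uv$, which is certainly permitted. Without the $K_{r-1}$-freeness there is no reason for $N(u)\cap N(v)$ to avoid one of the parts $V_1,\ldots,V_{r-1}$, and the mechanism by which you control the vertices of $V_r$ disappears. (Separately, the ``reassignment'' step is confused as written: moving every $u\in V_r$ into some $V_{i(u)}\subseteq N(v)$ would empty $V_r$ and leave you with an $(r-1)$-partition, which cannot be $\epsilon$-close to $T_r(n)$.)

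Standard inductive proofs of stability get around this either by first trimming to minimum degree at least $(1-\tfrac{1}{r}-o(1))n$ and then using a copy of $K_r$ (rather than a single vertex $v$) so that the $r$ non-neighbourhoods of its vertices supply the partition, or via a direct degree-majorization argument. Your outline has the right overall shape, but the structural control over $V_r$ must come from a different source than the one you invoke.
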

We will also make use of the following result.
\begin{thm}
\label{thm:Zykov}Let $r\geq1$ and suppose than $H$ is an $n$-vertex,
$K_{r+1}$-free graph. Then we have
\[
1=\frac{e_{1}(H)}{e_{1}(T_{r}(n))}\ge\frac{e_{2}(H)}{e_{2}(T_{r}(n))}\ge\cdots\ge\frac{e_{r}(H)}{e_{r}(T_{r}(n))}\;.
\]

\end{thm}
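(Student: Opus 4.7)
The plan is to prove the chain of inequalities one step at a time, i.e., for each $k = 1, \ldots, r-1$, I would show
\[
\frac{e_{k+1}(H)}{e_k(H)} \le \frac{e_{k+1}(T_r(n))}{e_k(T_r(n))}.
\]
In the balanced case $r \mid n$ one has $e_k(T_r(n)) = \binom{r}{k}(n/r)^k$ (and the general case is a routine correction), so the displayed inequality rearranges to
\[
r(k+1)\, e_{k+1}(H) \;\le\; n(r-k)\, e_k(H). \qquad (\star_k)
\]
Fixing $k$, I would consider the potential $\Phi_k(G) := n(r-k)\,e_k(G) - r(k+1)\,e_{k+1}(G)$ defined on $K_{r+1}$-free graphs $G$ on the vertex set $V(H)$, and prove $\Phi_k \ge 0$ by exhibiting a minimizer that is complete multipartite.

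The first main step is Zykov symmetrization. Given non-adjacent $u,v$ with $N(u) \neq N(v)$, I would consider the two operations of replacing $u$'s neighborhood by a copy of $N(v)$ and vice versa. Each preserves $K_{r+1}$-freeness and the vertex count. A direct calculation shows that if $c_j^u$ (resp.\ $c_j^v$) denotes the number of $j$-cliques of $H$ containing $u$ but not $v$ (resp.\ $v$ but not $u$), then cloning $v$ onto $u$ changes $e_j$ by $c_j^v - c_j^u$, whereas the opposite cloning produces the negated change. Consequently the change in $\Phi_k$ under the two operations are negatives of each other, so at least one of them does not increase $\Phi_k$. By iterating this process (choosing a monotone invariant such as the lexicographic order on sorted degree sequences, or by a minimizer-with-the-most-twin-pairs argument, to guarantee termination), one reaches a graph $H^*$ with $\Phi_k(H^*) \le \Phi_k(H)$ in which non-adjacency is an equivalence relation, i.e., a complete multipartite graph with at most $r$ parts of sizes $n_1,\ldots,n_r$ summing to $n$.

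For a complete multipartite graph $H^*$ with part sizes $n_1,\ldots,n_r$, the clique counts are exactly elementary symmetric polynomials: $e_k(H^*) = \sigma_k(n_1,\ldots,n_r)$. At this point Newton's inequalities (equivalently Maclaurin's) take over: the normalized polynomials $p_k := \sigma_k/\binom{r}{k}$ form a log-concave sequence, hence the successive ratios $p_{k+1}/p_k$ are non-increasing in $k$ and therefore bounded by $p_1/p_0 = n/r$. Unpacking,
\[
\frac{(k+1)\,\sigma_{k+1}}{(r-k)\,\sigma_k} \;=\; \frac{p_{k+1}}{p_k} \;\le\; \frac{n}{r},
\]
which, after clearing denominators, is precisely $(\star_k)$ for $H^*$. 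Combined with $\Phi_k(H) \ge \Phi_k(H^*) \ge 0$, this proves the theorem.

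The main obstacle is the bookkeeping in the symmetrization step, namely justifying that the iterative process indeed terminates at a complete multipartite graph without ever increasing $\Phi_k$. The naive potential "number of twin pairs" is not monotone because a single cloning can destroy previously existing twin relationships. The standard remedy is to start not with an arbitrary $H$ but with a $\Phi_k$-minimizer (over all $K_{r+1}$-free graphs on $n$ vertices) that additionally has the maximum number of twin pairs, and to derive a contradiction from the existence of any non-adjacent pair with distinct neighborhoods; alternatively, one can invoke a lexicographic-shift argument on the sorted degree sequence. Once termination is in hand, the Newton step is essentially bookkeeping and the proof concludes.
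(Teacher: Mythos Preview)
The paper does not give its own proof of this theorem; it merely records that ``a nice proof of Theorem~\ref{thm:Zykov} using symmetrization can be found in~\cite[Theorem~3.1]{goodarzi}.'' Your plan via Zykov symmetrization followed by an inequality for elementary symmetric polynomials is thus in line with the cited approach, and the symmetrization step (including the handling of termination via a minimizer argument) is sound.

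There is, however, a real gap when $r\nmid n$. Your inequality $(\star_k)$ is \emph{equivalent} to the stated inequality only in the divisible case, where $e_{k+1}(T_r(n))/e_k(T_r(n))=n(r-k)/(r(k+1))$ exactly. When $r\nmid n$ the part sizes of $T_r(n)$ are not all equal, and Newton's inequalities applied to $T_r(n)$ itself give the \emph{strict} bound $e_{k+1}(T_r(n))/e_k(T_r(n))<n(r-k)/(r(k+1))$. Hence the theorem's assertion $e_{k+1}(H)/e_k(H)\le e_{k+1}(T_r(n))/e_k(T_r(n))$ is strictly stronger than $(\star_k)$, and the Newton/Maclaurin step --- which only bounds $p_{k+1}/p_k$ by $p_1/p_0=n/r$ --- does not close the gap. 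The ``routine correction'' you allude to is not routine: after reducing to a complete multipartite graph one still needs to know that, among all nonnegative $r$-tuples with sum $n$, the ratio $\sigma_{k+1}/\sigma_k$ is maximized at the balanced integer point. This follows, for instance, from the Marcus--Lopes theorem that $\sigma_{k+1}/\sigma_k$ is concave on the positive orthant (hence, being symmetric, Schur-concave); alternatively one can run a second ``balancing'' symmetrization on the part sizes and verify directly that moving a vertex from a larger part to a smaller one can only increase $\sigma_{k+1}/\sigma_k$. With either of these ingredients in place your argument goes through, but as written the proof establishes the theorem only when $r\mid n$.
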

Theorem~\ref{thm:Zykov} generalizes the result of Zykov~\cite{Zykov}
that $e_{k}(H)\leq e_{k}(T_{r}(n))$ for all $1\le k\le r$, which
in turn generalizes Turán's Theorem stating $e_{2}(H)\leq e_{2}(T_{r}(n))$.
A nice proof of Theorem~\ref{thm:Zykov} using symmetrization\emph{
}can be found in~\cite[Theorem 3.1]{goodarzi}.

\medskip{}

Let us now motivate the Removal lemma. A graph of order $n$ can contain
at most ${n \choose r+1}=\Theta(n^{r+1})$ copies of $K_{r+1}$. If
the graph is not complete then of course it contains fewer copies.
However, we think of the graph $H$ as ``essentially $K_{r+1}$-free''
if $e_{r+1}(H)=o(n^{r+1})$. It is then tempting to say that by removing
a few edges we can delete all the copies of $K_{r+1}$. This is true,
yet far from trivial, and a subject of the famous Removal lemma, a
form of which first appeared in~\cite{RusSze:RemovalLemma}, and
which was later formulated in its full strength in~\cite{ErdFraRod:RemovalLemma}.
\begin{thm}
\label{thm:Removal}Suppose that $r\geq1$ and $\alpha>0$ are given.
Then there exists $\beta>0$ such that whenever $H$ is an $n$-vertex
graph with $e_{r+1}(H)\leq\beta n^{r+1}$ then by deleting a suitable
set of less than $\alpha n^{2}$ edges $H$ can be made $K_{r+1}$-free.
\end{thm}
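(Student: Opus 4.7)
The plan is to establish the Removal Lemma by the contrapositive: assuming that $H$ cannot be made $K_{r+1}$-free by deleting fewer than $\alpha n^2$ edges, I will produce at least $\beta n^{r+1}$ copies of $K_{r+1}$ in $H$ for a suitably chosen $\beta=\beta(\alpha,r)>0$. The standard route is through Szemer\'edi's Regularity Lemma combined with a Counting (or Key/Embedding) Lemma for cliques.

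First I would fix auxiliary parameters $\epsilon,d>0$ both much smaller than $\alpha$, apply the Regularity Lemma to $H$ with regularity parameter $\epsilon$, and obtain an equitable partition $V(H)=V_0\cup V_1\cup\cdots\cup V_k$ with $|V_0|\leq\epsilon n$ in which all but an $\epsilon$-fraction of the pairs $(V_i,V_j)$ are $\epsilon$-regular. Then I would form a cleaned subgraph $H'$ from $H$ by deleting all edges of the following four kinds: edges with an endpoint in $V_0$; edges inside some $V_i$; edges contained in a pair $(V_i,V_j)$ that is not $\epsilon$-regular; and edges contained in a pair of density less than $d$. A direct counting bounds the total number of deleted edges by roughly $(2\epsilon+d+1/k)\,n^2$, which is less than $\alpha n^2$ once $\epsilon,d,1/k$ are small enough.

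By the contrapositive hypothesis $H'$ still contains a copy of $K_{r+1}$, which necessarily spans $r+1$ distinct parts $V_{i_1},\ldots,V_{i_{r+1}}$, each of the $\binom{r+1}{2}$ pairs among them being $\epsilon$-regular with density at least $d$. The Counting Lemma then asserts that the number of transversal copies of $K_{r+1}$ across these parts is at least a positive constant depending on $d$, $\epsilon$, and $r$ times $(n/k)^{r+1}$. Choosing $\beta$ small enough relative to $d$, $\epsilon$, and $k$ produces strictly more than $\beta n^{r+1}$ copies of $K_{r+1}$ in $H$, contradicting the hypothesis $e_{r+1}(H)\leq\beta n^{r+1}$ and completing the argument.

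The main obstacle is the delicate orchestration of parameters: the Regularity Lemma only provides $k$ as a tower of exponentials in $1/\epsilon$, so $\beta$ must be taken correspondingly minuscule, on the order of $k^{-(r+1)}$. A secondary but nontrivial ingredient is the Counting Lemma itself, which is proved inductively by repeatedly selecting, from one part, a vertex whose joint neighborhood into the remaining parts preserves both the $d$-density lower bound and the $\epsilon$-regularity --- this step uses that in any $\epsilon$-regular pair all but $\epsilon|V_i|$ vertices have close to the expected degree into the other side. Once the parameters are assembled in the order $\epsilon\ll d\ll\alpha$, with $k$ coming from the Regularity Lemma and $\beta$ chosen last, the proof closes.
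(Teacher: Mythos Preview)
Your sketch is correct and is precisely the classical Szemer\'edi-regularity proof of the Removal Lemma. Note, however, that the paper does not supply its own proof of this theorem at all: it simply quotes the result from the literature (Ruzsa--Szemer\'edi and Erd\H{o}s--Frankl--R\"odl) and uses it as a black box, so there is no ``paper's proof'' to compare against beyond the citation.
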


\subsection{Outline of the proof of Theorem~\ref{thm:MainGraphTheory}}

Suppose $G$ triangulates a homology $(2r-1)$-manifold and the number
of vertices $n$ is large. First, note that if $k\leq r$ then $e_{k}(T_{r}(n))\approx{r \choose k}\left(\frac{n}{r}\right)^{k}$
and $e_{k}(J_{r}(n))=e_{k}(T_{r}(n))+O(n^{k-1})={r \choose k}\left(\frac{n}{r}\right)^{k}+O(n^{k-1})$.
Now if $G$ is such that $e_{k}(G)\leq(1-\alpha)e_{k}(T_{r}(n))$,
for a fixed (but arbitrarily small) $\alpha>0$ then the inequality
$F(G)\leq F(J_{r}(n))$ follows just by comparing the terms of order
$n^{k}$ in $F$. 

Thus, we only need to deal with the case $e_{k}(G)>(1-\alpha)e_{k}(T_{r}(n))$.
Lemma~\ref{lem:middleDS} tells us that $G$ has only $O(n^{r})=o(n^{r+1})$
many $K_{r+1}$'s. Thus, we can apply Theorem~\ref{thm:Removal}
and obtain a $K_{r+1}$-free subgraph $G'\subset G$ with $e_{2}(G')=e_{2}(G)-o(n^{2})$.
Since deleting one edge from $G$ can decrease the number of copies
of $K_{k}$ by at most $n^{k-2}$, we have that $e_{k}(G')\ge e_{k}(G)-o(n^{k})>(1-2\alpha)e_{k}(T_{r}(n))$.
From Theorem~\ref{thm:Zykov} we can deduce that $e_{2}(G')\approx e_{2}(T_{r}(n))$.
At this point Theorem~\ref{thm:ErdSimStability} shows that $G'$
must be similar to $T_{r}(n)$. Since $G$ and $G'$ are similar,
we also get that $G$ must be similar to $T_{r}(n)$. Additional geometric
properties of $X(G)$ allow us to conclude from there that $F(G)$
is maximized by $F(J_{r}(n))$; this part is given in Lemmas~\ref{lem:radicalmanifold}-\ref{lem:optimizeExtremal}
below.

\subsection{Organisation of the paper}

As said earlier, the difficult cases in Theorem~\ref{thm:MainGraphTheory}
are those when $G$ is $\epsilon$-close to $T_{r}(n)$ for some $\epsilon>0$.
We will analyze their structure more closely in the next section.
In Section~\ref{sec:Proof} we give a proof of Theorem~\ref{thm:MainGraphTheory}.
We then conclude with open problems stemming from this work in Section~\ref{sec:Conjectures}.

\section{Analysis of almost extremal graphs}

For any $r\geq1$ denote $[r]=\{1,\ldots,r\}$. We denote by $H[X]$
the subgraph of $H$ induced by a set of vertices $X$ and by $H[X,Y]$
the bipartite subgraph of $H$ with parts $X,Y\subset V(H)$, $X\cap Y=\emptyset$.

In this section we deal with almost extremal cases, that is, with
triangulations of homology $(2r-1)$-manifolds that are close to $T_{r}(n)$.
These graphs fall into the class of \textit{$(\eta,r)$}\textit{\emph{-extremal
graphs introduced below.}}
\begin{defn}
\label{def:extremal}Let $0\leq\eta<1$ and $r\geq1$ be given. We
say that an $n$-vertex graph $H$ is \textit{$(\eta,r)$-extremal}\textit{\emph{
if the vertices of $H$ can be partitioned into sets $V_{0},V_{1},\ldots,V_{r}$
such that}}
\begin{enumerate}[label=(\emph{\alph*})]
\item \label{enu:Dext1}$|V_{0}|\le\frac{1}{30r^{r}}\eta n$ and $\lfloor(1-\frac{1}{30r}\eta)\frac{n}{r}\rfloor\le|V_{i}|\le\lceil(1+\frac{1}{30r}\eta)\frac{n}{r}\rceil$
for $i\in[r]$,
\item \label{enu:Dext2}$H[V_{i}]$ is triangle-free, for $i\in[r]$,
\item \label{enu:Dext3}$H[V_{i}]$ has maximum degree at most~2, for $i\in[r]$,
\item \label{enu:Dext4}for each $i,j\in[r]$, $i\neq j$, and any $v\in V_{i}$
we have $\deg_{H}(v,V_{j})\geq(1-\eta)|V_{j}|$,
\item \label{enu:Dext5}each vertex of $V_{0}$ is either of Type 1 or Type
2, where
\end{enumerate}
we say a vertex $v$ is of \emph{Type~1} if there exist two distinct
indices $g,h\in[r]$ such that $\deg_{H}(v,V_{g})\le2$ and $\deg_{H}(v,V_{h})\le(1-\frac{1}{2}\eta)|V_{h}|$,
and it is of \emph{Type~2} if there exist two distinct indices $g,h\in[r]$
such that $\deg_{H}(v,V_{g})\leq3r\eta|V_{g}|$ and $\deg_{H}(v,V_{h})\le3r\eta|V_{h}|$.
\end{defn}

\medskip{}

Since $\lfloor\frac{n}{r}\rfloor,\lceil\frac{n}{r}\rceil=\frac{n}{r}\pm1$,
for any $\eta>0$ fixed and $n$ large, it would be an inessential
change to require $(1-\frac{1}{30r}\eta)\frac{n}{r}\le|V_{i}|\le(1+\frac{1}{30r}\eta)\frac{n}{r}$
in~\ref{enu:Dext1} of Definition~\ref{def:extremal}. The floors
and ceilings in~\ref{enu:Dext1} are important to make the case $\eta=0$
meaningful, when $\lfloor\frac{n}{r}\rfloor$ and $\lceil\frac{n}{r}\rceil$
are feasible cardinalities of sets, while $\frac{n}{r}$ need not
be.

For a small $\eta>0$, graphs with the structure given by Definition~\ref{def:extremal}
resemble $J_{r}(n)$ up to some error. That is, we allow that a small
fraction of edges missing in each $H[V_{i},V_{j}]$, that the parts
are slightly unbalanced and we admit a small set of exceptional vertices
$V_{0}$. In the next definition we introduce a class of graphs that
resemble $J_{r}(n)$ even better.
\begin{defn}
\label{def:radical}We say that a graph is \emph{$r$-radical} if
it is $(0,r)$-extremal, and for each $i\in[r]$ every vertex of $H[V_{i}]$
has degree $2$.
\end{defn}

If $H$ is $r$-radical then $V_{0}=\emptyset$, each $V_{i}$ is
of size $\lfloor\frac{n}{r}\rfloor$ or $\lceil\frac{n}{r}\rceil$
for $i\in[r]$ and each graph $H[V_{i},V_{j}]$ is complete bipartite
for $i,j\in[r]$, $i\neq j$.\textit{\emph{ An $r$-radical graph
is $(\eta,r)$-extremal for any $0\leq\eta<1$. Note that if $H$
is any $n$-vertex $r$-radical graph then $F(H)=F(J_{r}(n))$ for
every clique function $F.$ }}
\begin{lem}
\label{lem:radicalmanifold}If $H$ is an $r$-radical graph with
$n$ vertices which triangulates a homology $(2r-1)$-manifold then
$H$ is isomorphic to $J_{r}(n)$.\end{lem}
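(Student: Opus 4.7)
The plan is to show that each induced subgraph $H[V_i]$ is a single cycle. By the radical structure, each $H[V_i]$ is automatically a $2$-regular triangle-free graph, hence a disjoint union of cycles of length $\ge 4$, while each $H[V_i,V_j]$ (for $i\ne j$) is complete bipartite. Once connectedness of each $H[V_i]$ is established, the size constraint $|V_i|\in\{\lfloor n/r\rfloor,\lceil n/r\rceil\}$ immediately gives $H\cong J_r(n)$.

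To force connectedness of $H[V_i]$, the idea is to produce a face $\sigma$ of $X(H)$ whose link is exactly $H[V_i]$, and then invoke the homology-sphere condition on links of faces in a homology manifold. Fix $i\in[r]$, assuming $r\ge 2$ (the case $r=1$ is just the connectedness built into the triangulation). For each $j\in[r]\setminus\{i\}$ pick an edge $\{u_j,w_j\}$ of $H[V_j]$, which exists by $2$-regularity, and set $\sigma:=\bigcup_{j\ne i}\{u_j,w_j\}$. Complete bipartiteness across parts makes $\sigma$ a clique of size $2(r-1)$, hence a face of $X(H)$.

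Computing the link is then straightforward. Any $v\in V_i$ is adjacent to every vertex of $\sigma$ by the complete bipartite structure, so $v\in\mathrm{lk}_H(\sigma)$. Conversely, a vertex $v\in V_j$ with $j\ne i$ lying in the link would be a common neighbor in $H[V_j]$ of the adjacent pair $\{u_j,w_j\}$, which is excluded by triangle-freeness of $H[V_j]$. Hence $\mathrm{lk}_H(\sigma)=H[V_i]$, and since $H[V_i]$ is triangle-free its clique complex coincides with itself, so $\mathrm{lk}_{X(H)}(\sigma)=H[V_i]$.

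The homology-manifold hypothesis now forces $\mathrm{lk}_{X(H)}(\sigma)$ to be a homology $1$-sphere, since $(2r-1)-(2r-2)=1$. A disjoint union of $k$ cycles has $H_0\cong\mathbb{Z}^k$ and $H_1\cong\mathbb{Z}^k$, so matching the homology of $S^1$ forces $k=1$: each $H[V_i]$ is a single cycle, and $H\cong J_r(n)$ follows. There is no serious obstacle here; the only genuine design choice is the face $\sigma$, which must be picked so that the two features of the radical structure---complete bipartiteness across parts and triangle-freeness within each part---cooperate to prune the link down to exactly one piece $H[V_i]$.
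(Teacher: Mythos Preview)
Your proof is correct and follows essentially the same approach as the paper: pick one edge in each $H[V_j]$ for $j\neq i$, observe that their union is a $(2r-2)$-clique whose link in $H$ is exactly $H[V_i]$, and then use that the link of a codimension-$2$ face in a homology $(2r-1)$-manifold is a homology $1$-sphere, hence a single cycle. You have simply spelled out more carefully why the link equals $H[V_i]$ (via complete bipartiteness across parts and triangle-freeness within parts) and why a homology $1$-sphere must be connected, whereas the paper asserts both facts without elaboration.
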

\begin{proof}
For all $i=1,\ldots,r-1$ pick any edge in $H[V_{i}]$. The endpoints
of these $r-1$ edges form a clique of order $2r-2$ whose link is
$H[V_{r}]$. However, in a homology $(2r-1)$-manifold the link of
a face of size $2r-2$ is a homology $1$-sphere, that is a cycle.
It means that $H[V_{r}]$ is a cycle. The same argument shows that
all $H[V_{i}]$ are cycles and therefore $H$ is isomorphic to $J_{r}(n)$.
\end{proof}

The next lemma is used to find copies of $K_{3}^{r}$ in $(\eta,r)$-extremal
graphs.
\begin{lem}
\label{lem:bigk3}Fix $r\geq1$ and $0<\eta<\frac{1}{3r}$. Suppose
$H$ is a graph with $n\geq2r\eta^{-1}$ vertices and a partition
$V(H)=V_{0}\sqcup V_{1}\sqcup\cdots\sqcup V_{r}$ which satisfies
conditions~ \ref{enu:Dext1} and~\ref{enu:Dext4} of Definition~\ref{def:extremal}.
Let $w_{1},w_{2},w_{3}\in V_{1}$ be any three fixed vertices. For
$i\in\{2,\ldots,r\}$ let $A_{i}\subseteq V_{i}$ be sets with $|A_{i}|\geq3r\eta|V_{i}|$.
Then the subgraph of $H$ induced by $\{w_{1},w_{2},w_{3}\}\cup\bigcup_{i=2}^{r}A_{i}$
contains a $K_{3}^{r}$ with $3$ vertices in each part $V_{i}$,
$i\in[r]$.\end{lem}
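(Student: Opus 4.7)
The plan is to construct the desired $K_3^r$ one part at a time, taking $\{w_1,w_2,w_3\}\subset V_1$ as the three vertices in the first part and then greedily picking three vertices in each of $V_r,V_{r-1},\ldots,V_2$ (in this order), requiring each newly chosen vertex to be adjacent to every vertex already chosen. The case $r=1$ is trivial, since $K_3^1$ is a triple of vertices with no edges and $\{w_1,w_2,w_3\}$ itself does the job.

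The first step is to prune each $A_i$ (for $i\in\{2,\ldots,r\}$) to the subset $A_i'\subseteq A_i$ of vertices adjacent to all three of $w_1,w_2,w_3$. Condition~\ref{enu:Dext4} applied to $w_1,w_2,w_3\in V_1$ ensures each $w_j$ has at most $\eta|V_i|$ non-neighbours in $V_i$, so at most $3\eta|V_i|$ vertices of $V_i$ are discarded and
\[
|A_i'|\;\ge\;|A_i|-3\eta|V_i|\;\ge\;3r\eta|V_i|-3\eta|V_i|\;=\;3(r-1)\eta|V_i|.
\]

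The core of the argument is the greedy loop through $i=r,r-1,\ldots,2$. When it is time to process $V_i$ there are $3(r-i)$ vertices already selected, each lying in some $V_j$ with $j>i$; by~\ref{enu:Dext4} each of them has at most $\eta|V_i|$ non-neighbours in $V_i$. Hence the number of candidates in $A_i'$ still adjacent to every previously picked vertex is at least
\[
3(r-1)\eta|V_i|-3(r-i)\eta|V_i|\;=\;3(i-1)\eta|V_i|\;\ge\;3\eta|V_i|,
\]
using $i\ge 2$ at the last inequality. As long as $\eta|V_i|\ge 1$, three candidates are available, so the loop proceeds to completion and the selected $3r$ vertices, three in each $V_i$, induce the desired $K_3^r$.

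The only thing that must be verified is the bound $\eta|V_i|\ge 1$, which is the tightest part of the argument and is met at the last step $i=2$. This is exactly the pressure point that the hypothesis $n\ge 2r\eta^{-1}$ (combined with $\eta<\frac{1}{3r}$ and the size bounds in~\ref{enu:Dext1}) is designed to guarantee. I do not expect any real obstacle beyond this bookkeeping; the combinatorial content is just a standard greedy embedding powered by the near-completeness of the bipartite parts coming from~\ref{enu:Dext4}.
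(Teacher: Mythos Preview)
Your proposal is correct and is essentially the same greedy embedding argument as the paper's proof: both build the $K_3^r$ one part at a time, using condition~\ref{enu:Dext4} to bound the number of non-neighbours lost at each stage, and both arrive at the same worst-case bound of $3\eta|V_i|$ available candidates, which is at least $3$ by $|V_i|\ge n/(2r)$ and $n\ge 2r\eta^{-1}$. The only cosmetic difference is that the paper processes the parts in the order $V_2,\ldots,V_r$ while you process them in the order $V_r,\ldots,V_2$ (and you separate out the pruning against $w_1,w_2,w_3$ as a preliminary step rather than folding it into the main loop); the arithmetic is identical either way.
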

\begin{proof}
We will construct by induction $3$-element subsets $\{w_{1}^{i},w_{2}^{i},w_{3}^{i}\}\subseteq A_{i}$
such that for each $l\in[r]$ the subgraph of $H$ induced by $w_{j}^{i}$
with $j=1,2,3$ and $i=1,\ldots,l$ contains $K_{3}^{l}$. For $l=r$
this proves the lemma. When $i=1$ the vertices $w_{j}^{1}=w_{j}$
are already given.

Suppose we have constructed the vertices $\{w_{1}^{i},w_{2}^{i},w_{3}^{i}\}_{i=1}^{l}$
for some $l\leq r-1$. By condition~\ref{enu:Dext4} the common neighborhood
$N_{l+1}$ of these $3l$ vertices satisfies $|N_{l+1}\cap V_{l+1}|\geq(1-3l\eta)|V_{l+1}|$.
It follows that 
\begin{eqnarray*}
|A_{l+1}\cap N_{l+1}| & \geq & |A_{l+1}|-|V_{l+1}\setminus N_{l+1}|\geq3r\eta|V_{l+1}|-3l\eta|V_{l+1}|\\
 & \geq & 3\eta|V_{l+1}|\geq3\eta\frac{n}{2r}\geq3,
\end{eqnarray*}
where the last line uses condition~\ref{enu:Dext1} of Definition~\ref{def:extremal}
and the bound $n\geq2r\eta^{-1}$. It means that we can pick three
distinct vertices $w_{1}^{l+1},w_{2}^{l+1},w_{3}^{l+1}\in A_{l+1}\cap N_{l+1}$
and the induction step is complete.
\end{proof}

We can now prove that graphs triangulating homology $(2r-1)$-manifolds
are $(\eta,r)$-extremal whenever they are sufficiently close to $T_{r}(n)$.
\begin{lem}
\label{lem:obtainextremal-1}For every $r\geq2$ and $0<\eta<\frac{1}{7r}$
set $\epsilon=\frac{\eta^{2}}{120r^{r+3}}$. If a graph $H$ with
$n\geq2r\eta^{-1}$ vertices triangulates a homology $(2r-1)$-manifold
and $H$ is $\epsilon$-close to $T_{r}(n)$ then $H$ is $(\eta,r)$-extremal.\end{lem}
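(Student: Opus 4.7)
The plan is to construct an $(\eta,r)$-extremal partition $V_0,V_1,\ldots,V_r$ by starting from the $\epsilon$-close witnessing partition of $H$ and verifying conditions \ref{enu:Dext1}--\ref{enu:Dext5} in turn. Let $W_1,\ldots,W_r$ witness the $\epsilon$-closeness. I first perform a short reshuffle: while some vertex $v \in W_i$ has $\deg_H(v,W_j) < \deg_H(v,W_i)$ for some $j$, move $v$ into $W_j$. Each such move strictly decreases the total within-part edge count, so the procedure terminates; in the resulting partition $W_1',\ldots,W_r'$ every vertex has minimum within-part degree among its $r$ options, and by swapping a handful of vertices back one can ensure the sizes stay in $\{\lfloor n/r\rfloor, \lceil n/r\rceil\}$.

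Set $\delta_0 := \eta/(2r^3)$, put $V_0 := \{v : \deg_H(v,W_{f(v)}') > \delta_0 n\}$, where $f(v)$ is the home part of $v$, and let $V_i := W_i'\setminus V_0$. Double counting the within-part edges of $H$ (still less than $\epsilon n^2$ after reshuffle) yields $|V_0|\cdot\delta_0 n \leq 2\epsilon n^2$, whence $|V_0|\leq \eta n/(30r^r)$; this together with the size of $W_i'$ gives \ref{enu:Dext1}. For \ref{enu:Dext4}, each $v\in V_i$ has at most $\delta_0 n$ missing edges to each $W_j'$, so $\deg_H(v,V_j)\geq |V_j| - \delta_0 n - |V_0|\geq (1-\eta)|V_j|$ for all $j\neq i$.

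Next I use the $K_3^r$-freeness of links (Lemma~\ref{lem:vankampen}) to verify \ref{enu:Dext2} and \ref{enu:Dext3}. For \ref{enu:Dext3}, if some $v\in V_i$ had three neighbours $w_1,w_2,w_3\in V_i$, then Lemma~\ref{lem:bigk3} applied to $H$ with $V_i$ as the first part, the triple $w_1,w_2,w_3$, and $A_j := V_j\cap N(v)$ for $j\neq i$ (of size $\geq (1-\eta)|V_j|\geq 3r\eta|V_j|$ by~\ref{enu:Dext4}, since $\eta<\frac{1}{7r}$) produces a $K_3^r$ in $\mathrm{lk}_H(v)$; but $\mathrm{lk}_H(v)$ is a homology $(2r-2)$-sphere, hence $K_3^r$-free, a contradiction. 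For \ref{enu:Dext2}, if $\{u_1,u_2,u_3\}\subseteq V_i$ were a triangle, then $\mathrm{lk}_H(\{u_1,u_2,u_3\})$ is a homology $2(r-2)$-sphere and hence $K_3^{r-1}$-free; however the inductive greedy construction from the proof of Lemma~\ref{lem:bigk3}, applied to the $r-1$ sets $V_j\cap\bigcap_s N(u_s)$ for $j\neq i$ (each of size $\geq(1-3\eta)|V_j|$ by three applications of~\ref{enu:Dext4}), produces a $K_3^{r-1}$ there, again a contradiction.

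Finally, for~\ref{enu:Dext5}, given $v\in V_0$ I set $S := \{j\in[r]: \deg_H(v,V_j) < 3r\eta|V_j|\}$. The same Lemma~\ref{lem:bigk3} argument applied to $\mathrm{lk}_H(v)$ forbids the configuration where some $j_1\notin S$ has $\deg_H(v,V_{j_1})\geq 3$ while $[r]\setminus\{j_1\}\subseteq [r]\setminus S$. The case $|S|=0$ produces this configuration for free (every index has $\deg\geq 3r\eta|V_j|\geq 3$), while $|S|=1$ with $S=\{g\}$ would force $\deg_H(v,V_g)\leq 2$ --- which contradicts the reshuffle's consequence that every $v\in V_0$ has $\deg_H(v,V_g)\geq \delta_0 n - |V_0| > 2$ for $n$ large enough. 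Hence $|S|\geq 2$ and any two of its elements witness that $v$ is of Type~2. The hardest step is precisely this last contradiction: without the reshuffle, a $V_0$-vertex might have a single deficient part together with another part in which it is almost fully present, fitting neither Type~1 nor Type~2; the reshuffle is exactly what forces two deficient parts for every $v\in V_0$.
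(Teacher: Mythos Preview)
Your argument contains a real gap in the verification of condition~\ref{enu:Dext4}. After the reshuffle, a vertex $v\in W'_i$ satisfies $\deg_H(v,W'_i)\leq\deg_H(v,W'_j)$ for every $j$; that is, its own-part degree is the \emph{minimum} over all parts. For $v\in V_i=W'_i\setminus V_0$ this yields $\deg_H(v,W'_i)\leq\delta_0 n$, but it says nothing about the number of \emph{missing} edges from $v$ to any $W'_j$ with $j\neq i$: the minimizing property only gives the useless lower bound $\deg_H(v,W'_j)\geq\deg_H(v,W'_i)\geq 0$. A vertex with low degree to \emph{every} part (which costs only $O(n)\ll\epsilon n^2$ in edit distance, so is perfectly compatible with $\epsilon$-closeness) would land in some $V_i$ rather than $V_0$, and would violate~\ref{enu:Dext4} badly. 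Since your verifications of~\ref{enu:Dext2},~\ref{enu:Dext3} and~\ref{enu:Dext5} all invoke Lemma~\ref{lem:bigk3}, which in turn requires~\ref{enu:Dext4}, the whole chain collapses.

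The paper resolves this tension by building $V_0$ from the opposite direction. It first collects into an exceptional set $Y_0$ every vertex with deficient degree to some \emph{other} original part (this secures~\ref{enu:Dext4} for the complement automatically), and then returns to $V_j$ those $Y_0$-vertices whose \emph{unique} deficient index is $j$. What remains in $V_0$ are vertices with at least two deficient indices, and this is exactly what drives the Type~1/Type~2 split in~\ref{enu:Dext5}: if such a $v$ is not Type~2, the $K_3^r$-argument forces $\deg(v,V_g)\leq 2$ at one index $g$, while the second guaranteed deficient index $h$ supplies $\deg(v,V_h)\leq(1-\tfrac12\eta)|V_h|$, giving Type~1. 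Your reshuffle tries to force every $V_0$-vertex into Type~2 outright; that would be tidy for~\ref{enu:Dext5}, but the price is precisely the loss of~\ref{enu:Dext4}.

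Two smaller issues. The phrase ``swapping a handful of vertices back'' is not justified: the reshuffle can displace on the order of $\sqrt{\epsilon}\,n$ vertices (this is all the bound $\sum_{i<j}|W'_i||W'_j|\geq e(T_r(n))-O(\epsilon n^2)$ gives), and for $r=2$ this already exceeds the tolerance in~\ref{enu:Dext1}; moreover any swap perturbs the part-degrees of neighbouring vertices and so can destroy the minimizing property you later rely on. Finally, your inequality $\delta_0 n-|V_0|>2$ does not follow from the stated hypothesis $n\geq 2r\eta^{-1}$: with $\delta_0=\eta/(2r^3)$ that only guarantees $\delta_0 n\geq 1/r^2<1$.
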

\begin{proof}
If $H$ is $\epsilon$-close to $T_{r}(n)$ then the vertices of $H$
can be partitioned into $r$ sets $X_{1},\ldots,X_{r}$, each of size
$\lfloor\frac{n}{r}\rfloor$ or $\lceil\frac{n}{r}\rceil$, such that
\[
\sum_{i<j}\overline{e}_{2}(H[X_{i},X_{j}])+\sum_{i}e_{2}(H[X_{i}])\leq\epsilon n^{2}.
\]
Here $\overline{e}_{2}(H[X_{i},X_{j}])$ is the number of edges missing
between $X_{i}$ and $X_{j}$, that is $\overline{e}_{2}(H[X_{i},X_{j}])=|X_{i}|\cdot|X_{j}|-e_{2}(H[X_{i},X_{j}])$.
For every $i\neq j$ let 
\[
X_{i,j}=\{v\in X_{i}\colon\deg(v,X_{j})\leq(1-\frac{2}{3}\eta)|X_{j}|\}.
\]
Every vertex in $X_{i,j}$ contributes to the number of missing edges
$\overline{e}_{2}(H[X_{i},X_{j}])$ as follows 
\[
\epsilon n^{2}\geq\overline{e}_{2}(H[X_{i},X_{j}])\geq\sum_{v\in X_{i,j}}(|X_{j}|-\deg(v,X_{j}))\geq\frac{2}{3}\eta|X_{j}|\cdot|X_{i,j}|\geq\frac{1}{2}\eta\frac{n}{r}\cdot|X_{i,j}|,
\]
 hence $|X_{i,j}|\leq2\epsilon r\eta^{-1}n$. Consider a new partition
$V(H)=Y_{0}\sqcup Y_{1}\sqcup\ldots\sqcup Y_{r}$, 
\[
Y_{0}=\bigcup_{i\neq j}X_{i,j},\quad Y_{i}=X_{i}\setminus Y_{0}\ \mathrm{\mathrm{for}}\ i\in[r].
\]
 We have $|Y_{0}|\leq r^{2}\cdot2\epsilon r\eta^{-1}n=\frac{1}{60r^{r}}\eta n$
and, for $i\in[r]$, 
\[
\lceil\frac{n}{r}\rceil\geq|X_{i}|\geq|Y_{i}|\geq|X_{i}|-|Y_{0}|\geq\lfloor\frac{n}{r}\rfloor-\frac{1}{60r^{r}}\eta n\ge(1-\tfrac{1}{30r}\eta)\frac{n}{r}.
\]
 By definition, for every vertex $v\in Y_{0}$ there exists an index
$j\in[r]$ such that $\deg(v,X_{j})\leq(1-\frac{2}{3}\eta)|X_{j}|$.
Let $Z_{j}\subset Y_{0}$ consists of those vertices for which $j$
is the only such index, formally: 
\[
Z_{j}=\left\{ v\in Y_{0}\colon\deg(v,X_{k})\leq(1-\tfrac{2}{3}\eta)|X_{k}|\ \mbox{\ensuremath{\mathrm{iff}}}\ k=j\ \mathrm{for}\ k\in[r]\right\} .
\]
 We now define the final partition of $V(H)$ as 
\[
V_{0}=Y_{0}\setminus\bigcup_{j}Z_{j},\quad V_{i}=Y_{i}\cup Z_{i}\ \mathrm{for}\ i\in[r].
\]
We claim that the partition $V(H)=V_{0}\sqcup V_{1}\sqcup\ldots\sqcup V_{r}$
witnesses $(\eta,r)$-extremality of $H$. We have $|V_{0}|\leq|Y_{0}|\leq\frac{1}{30r^{r}}\eta n$
and $|V_{i}|\geq|Y_{i}|\geq(1-\frac{1}{30r}\eta)\frac{n}{r}$ for
$i\in[r]$. Moreover, 
\[
|V_{i}|=|Y_{i}|+|Z_{i}|\leq|Y_{i}|+|Y_{0}|\leq\lceil\tfrac{n}{r}\rceil+\tfrac{1}{60r^{r}}\eta n\leq(1+\tfrac{1}{30r}\eta)\frac{n}{r}.
\]
That proves condition~\ref{enu:Dext1} of Definition~\ref{def:extremal}.
Next we verify condition~\ref{enu:Dext4}. Pick any vertex $v\in V_{i}$,
$i\in[r]$. Regardless of whether $v\in Y_{i}$ or $v\in Z_{i}$ we
have that $\deg(v,X_{j})\geq(1-\frac{2}{3}\eta)|X_{j}|$ for all $j\neq i$.
That yields 
\begin{eqnarray*}
\deg(v,V_{j}) & \geq & \deg(v,Y_{j})\geq\deg(v,X_{j})-|Y_{0}|\geq(1-\tfrac{2}{3}\eta)|X_{j}|-|Y_{0}|\\
 & \geq & (1-\tfrac{2}{3}\eta-\tfrac{\eta}{30r^{r-1}})\frac{n}{r}\geq(1-\tfrac{2}{3}\eta-\tfrac{\eta}{30r^{r-1}})(1+\tfrac{1}{30r}\eta)^{-1}|V_{j}|\\
 & \geq & (1-\eta)|V_{j}|.
\end{eqnarray*}

To prove property~\ref{enu:Dext2}, suppose, without loss of generality,
that $H[V_{1}]$ contains a triangle $t=\{w_{1},w_{2},w_{3}\}$. For
$i=2,\ldots,r$ let $A_{i}=N_{H}(w_{1})\cap N_{H}(w_{2})\cap N_{H}(w_{3})\cap V_{i}.$
By the already proven property~\ref{enu:Dext4} we have $|A_{i}|\geq(1-3\eta)|V_{i}|\geq3\eta r|V_{i}|$
(the last inequality uses $\eta<\frac{1}{7r}$). Since $n\geq2r\eta^{-1}$
Lemma~\ref{lem:bigk3} now yields that the link $\mathrm{lk}_{H}(t)$
contains $K_{3}^{r-1}$ as a subgraph. This is a contradiction to
Lemma~\ref{lem:vankampen}, since $\mathrm{lk}_{H}(t)$ triangulates
a homology sphere of dimension $2r-1-3=2(r-2).$

Similarly, to prove~\ref{enu:Dext3}, suppose $v\in V_{1}$ has three
distinct neighbors $w_{1},w_{2},w_{3}\in V_{1}$. Applying Lemma~\ref{lem:bigk3}
with $w_{1},w_{2},w_{3}$ and $A_{i}=N_{H}(v)\cap V_{i}$ for $i=2,\ldots,r$,
where $|A_{i}|\geq(1-\eta)|V_{i}|\geq3\eta r|V_{i}|$, we get that
$\mathrm{lk}_{H}(v)$ contains a $K_{3}^{r}$. This contradicts the
fact that $\mathrm{lk}_{H}(v)$ triangulates a homology $2(r-1)$-sphere.

We now turn to verifying~\ref{enu:Dext5}. Let us start with an auxiliary
claim. 
\begin{claim*}
Let $v\in V_{0}$ be any vertex and suppose $j\in[r]$ is any index
such that $\deg(v,X_{j})\leq(1-\frac{2}{3}\eta)|X_{j}|$. Then $\deg(v,V_{j})\le(1-\frac{1}{2}\eta)|V_{j}|$. \end{claim*}
\begin{proof}
We have 
\begin{eqnarray*}
\deg(v,V_{j}) & \leq & \deg(v,Y_{j})+|Z_{j}|\leq\deg(v,X_{j})+|Y_{0}|\\
 & \leq & (1-\tfrac{2}{3}\eta)\lceil\tfrac{n}{r}\rceil+\tfrac{1}{60r^{r}}\eta n\leq(1-\tfrac{2}{3}\eta+\tfrac{1}{15r^{r-1}}\eta)\tfrac{n}{r}\\
 & \leq & (1-\tfrac{2}{3}\eta+\tfrac{1}{15r^{r-1}}\eta)(1-\tfrac{1}{30r}\eta)^{-1}|V_{j}|\leq(1-\tfrac{1}{2}\eta)|V_{j}|.
\end{eqnarray*}

\end{proof}
Now suppose that some vertex $v\in V_{0}$ is not of Type~2. Then,
without loss of generality, $\deg(v,V_{i})>3\eta r|V_{i}|$ for $i=2,\ldots,r$.
Suppose that $\deg(v,V_{1})\geq3$ and let $w_{1},w_{2},w_{3}\in N_{H}(v)\cap V_{1}$
be three distinct vertices. We already proved properties~\ref{enu:Dext1}
and~\ref{enu:Dext4}, so we can apply Lemma~\ref{lem:bigk3} with
$w_{1},w_{2},w_{3}$ and $A_{i}=N_{H}(v)\cap V_{i}$ for $i=2,\ldots,r$
to conclude that $\mathrm{lk}_{H}(v)$ contains a $K_{3}^{r}$, a
contradiction to Lemma~\ref{lem:vankampen}. Therefore, $\deg(v,V_{1})\leq2$.
By the definition of $V_{0}$, there exist an index $j\neq1$ such
$\deg(v,X_{j})\leq(1-\frac{2}{3}\eta)|X_{j}|$. The above Claim then
gives that $\deg(v,V_{j})\leq(1-\frac{1}{2}\eta)|V_{j}|.$ This proves
that $v$ is of Type~1. Condition~\ref{enu:Dext5} follows.

This completes the proof of the lemma.
\end{proof}
Our last lemma says that for among $(\eta,r)$-extremal graphs, the
graph $J_{r}(n)$ maximizes any clique function of order up to $r$
(for sufficiently large $n$). Note that in this part of the proof
we do not assume that $H$ triangulates a homology manifold.
\begin{lem}
\label{lem:optimizeExtremal}Let $r\geq2$ and let $F$ be a clique
function of order $k$, $2\le k\le r$. Set $\eta=\frac{1}{14r^{r}}$.
Then there exists a number $m_{0}$ such that the following holds.
If $H$ is an $(\eta,r)$-extremal graph with $n\geq m_{0}$ vertices
then $F(H)\le F(J_{r}(n))$ and equality is attained only when $H$
is $r$-radical.\end{lem}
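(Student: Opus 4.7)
The plan is to bound $e_k(H)$ from above by a symmetric multilinear polynomial in the part sizes, compare with $e_k(J_r(n))$, and leverage the resulting quantitative gap to dominate the lower-order terms of $F$ for large $n$. Write $m_i=|V_i|$, $e_i=|E(H[V_i])|$, $v_0=|V_0|$, $\bar m=n/r$, and set
\[
g(x_1,\ldots,x_r):=\sum_{\substack{a\in\{0,1,2\}^r\\ \sum a_i=k}}\,\prod_{i:\,a_i>0}x_i.
\]
Since an $r$-radical graph $H^*$ has every part $2$-regular (so $|E(H^*[V_i])|=|V_i|$) and every cross block complete bipartite, $e_k(H^*)=g(|V_1|,\ldots,|V_r|)$; in particular $e_k(J_r(n))=g(n_1',\ldots,n_r')$ with $n_i'\in\{\lfloor n/r\rfloor,\lceil n/r\rceil\}$.

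First I would count the $k$-cliques of $H$ disjoint from $V_0$. By conditions~\ref{enu:Dext2}--\ref{enu:Dext3}, each $V_i$ ($i\ge1$) contributes at most $2$ vertices to any clique, and if exactly $2$ then they form an edge of $H[V_i]$ (of which there are $e_i\le m_i$). Summing over patterns yields $\#\{k\text{-cliques avoiding }V_0\}\le g(m_1,\ldots,m_r)$, with strict inequality unless every $H[V_i]$ is $2$-regular and every bipartite block $H[V_i,V_j]$ is complete. For cliques touching $V_0$, I would use the link bound $\sum_{v\in V_0}e_{k-1}(\mathrm{lk}_H(v))$. For a Type~1 vertex $v$, the same pattern analysis applied to $\mathrm{lk}_H(v)$---whose slices in $V_g,V_h$ have sizes at most $2$ and $(1-\tfrac\eta2)m_h$ by~\ref{enu:Dext5}---yields $e_{k-1}(\mathrm{lk}_H(v))\le(1-c_1\eta)\binom{r-1}{k-1}\bar m^{k-1}$ for an explicit $c_1=c_1(r,k)>0$, while a Type~2 vertex contributes at most $O(\eta)\binom{r-1}{k-1}\bar m^{k-1}$, which is strictly smaller.

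Next I would invoke Maclaurin's inequality: $g$ is a non-negative combination of elementary symmetric polynomials in $x_1,\ldots,x_r$, so under the constraint $\sum x_i=n-v_0$ it is maximized when the $x_i$ are equal. A direct computation then yields
\[
g(n_1',\ldots,n_r')-g(m_1,\ldots,m_r)\ge\binom{r-1}{k-1}v_0\bar m^{k-1}+O(n^{k-2}),
\]
and combining this with the link bound above gives
\[
e_k(H)\le e_k(J_r(n))-c_1\eta\binom{r-1}{k-1}v_0\bar m^{k-1}+O(n^{k-2}).
\]
Hence $e_k(H)\le e_k(J_r(n))$, with equality forcing $v_0=0$, balanced $m_i$, $2$-regular $H[V_i]$, and complete cross blocks---that is, $H$ is $r$-radical, and then $F(H)=F(J_r(n))$ as noted just before the lemma.

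Finally, if $H$ is not $r$-radical then any structural defect---a missing cross edge, a vertex of degree $<2$ in some $H[V_i]$, or a vertex in $V_0$---forces $e_k(J_r(n))-e_k(H)\ge c(r,k)\,n^{k-2}$ (or $\ge c(r,k)\,\eta\, n^{k-1}$ in the $V_0$-case), while the corresponding differences $e_i(H)-e_i(J_r(n))$ for $i<k$ are of order $O(n^{i-2})=o(n^{k-2})$. Thus for $n\ge m_0$ sufficiently large (depending on the coefficients of $F$), the term $c_k(e_k(J_r(n))-e_k(H))$ dominates and $F(H)<F(J_r(n))$. The main technical obstacle is the quantitative matching of constants in the combined bound: the Maclaurin loss $\binom{r-1}{k-1}\bar m^{k-1}$ per $V_0$-vertex must strictly exceed the Type~1 link-clique count $(1-c_1\eta)\binom{r-1}{k-1}\bar m^{k-1}$, and this is exactly where the calibrated value $\eta=\frac{1}{14r^r}$ enters.
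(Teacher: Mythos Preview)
Your approach is genuinely different from the paper's. The paper takes a maximizer $H$ of $F$ among $(\eta,r)$-extremal graphs and shows, via five local modifications each preserving $(\eta,r)$-extremality, that $H$ must be $r$-radical; in each step the comparison is between $H$ and an $H'$ that differs by a \emph{single} edit, so $e_k(H')-e_k(H)=\Theta(n^{k-\alpha})$ while $|e_i(H')-e_i(H)|\le n^{i-\alpha}$ with the \emph{same} exponent shift $\alpha$, which is exactly what makes the passage to an arbitrary clique function $F$ work. You instead compare $H$ directly to $J_r(n)$ via a global upper bound on $e_k(H)$. Your $e_k$-analysis (patterns, Maclaurin/Schur-concavity, and the Type~1/Type~2 link bounds) is essentially sound, and with honest bookkeeping it does yield $e_k(H)\le e_k(J_r(n))$ with equality only in the $r$-radical case; the $O(n^{k-2})$ you write after the Maclaurin step should really be a multiplicative $(1-O(\eta))$ on the main term, but since your $c_1=\tfrac{k-1}{2(r-1)}$ dominates the competing $O(1/r)$-constants this is harmless.

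The real gap is in your last paragraph, where you pass from $e_k$ to $F$. You assert that for $i<k$ the differences $e_i(H)-e_i(J_r(n))$ are $O(n^{i-2})$. This is false for general $(\eta,r)$-extremal $H$. Condition~(d) permits up to $\eta|V_i||V_j|$ missing edges in each bipartite block, so $H$ may be missing $\Theta(\eta n^2)$ cross-edges; then $e_i(J_r(n))-e_i(H)=\Theta(\eta n^i)$ for every $i\ge 2$, not $O(n^{i-2})$. Likewise, a single vertex in $V_0$ already gives $|e_i(H)-e_i(J_r(n))|=\Theta(\eta n^{i-1})$, not $O(n^{i-2})$. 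In such scenarios your lower bound on the $e_k$-gap (e.g.\ $c\eta n^{k-1}$ from one $V_0$-vertex) is of the \emph{same order} as $|c_{k-1}|$ times a crude bound on $|e_{k-1}(H)-e_{k-1}(J_r(n))|$, and you have not shown that the constants fall your way for an arbitrary $F$. What is actually needed---and what the paper's local-edit strategy delivers automatically---is a bound of the shape $|e_i(J_r(n))-e_i(H)|\le C\,n^{i-k}\,(e_k(J_r(n))-e_k(H))$; proving this directly would require redoing your pattern/Maclaurin/link analysis coherently for every $i\le k$ and showing the deficits scale together across defect types, which is substantially more work than your sketch suggests.
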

\begin{proof}
Let the clique function be $F(G)=c_{k}e_{k}(G)+c_{k-1}e_{k-1}(G)+\ldots+c_{1}e_{1}(G)+c_{0}$.
The value of $m_{0}$ will be chosen during the proof in such a way
that (\ref{eq:maxAddEdge}), (\ref{eq:maxType1}), (\ref{eq:maxType2}),
(\ref{eq:maxCycles}) and (\ref{eq:maxEqualParts}) are satisfied
for all $n\geq m_{0}$. Among all $(\eta,r)$-extremal graphs with
$n$ vertices, let us consider a graph $H$ that maximizes $F(H)$.
We will prove that $H$ is $r$-radical. The proof proceeds in five
claims in which we derive the structural properties asserted by $r$-radicality.
More specifically, we will prove that if $H$ violated any of the
conditions of Definition~\ref{def:radical} then we could modify
$H$ locally so that the resulting graph were still $(\eta,r)$-extremal,
yet with a strictly greater value of the clique function $F$.
\begin{claim*}
For each $i,j\in[r]$, $i\neq j$, the bipartite graph $H[V_{i},V_{j}]$
is complete.\end{claim*}
\begin{proof}
Suppose for a contradiction and without loss of generality that there
exist vertices $v_{1}\in V_{1}$ and $v_{2}\in V_{2}$ that do not
form an edge. Let us now add that edge to $H$. Observe that the modified
graph $H'$ is still $(\eta,r)$-extremal. We will now find a lower
bound for the number of cliques in $H'$ which contain the edge $v_{1}v_{2}.$
By condition~\ref{enu:Dext4}, $v_{1}$ and $v_{2}$ are both adjacent
to at least $(1-2\eta)|V_{3}|\ge(1-3\eta)\frac{n}{r}$ vertices $v_{3}$
in $V_{3}$. In general, given vertices $v_{1}\in V_{1},v_{2}\in V_{2},\ldots,v_{\ell}\in V_{\ell}$
there are at least $(1-\ell\eta)|V_{\ell+1}|\ge(1-(\ell+1)\eta)\frac{n}{r}$
vertices $v_{\ell+1}$ in $V_{l+1}$ adjacent to each of $v_{1},v_{2},\ldots,v_{\ell}$.
This sequential extension gives at least $((1-k\eta)\frac{n}{r})^{k-2}>(\frac{1}{2}\cdot\frac{n}{r})^{k-2}$
many $k$-cliques containing both $v_{1}$ and $v_{2}$ in $H'$.
For each $t=2,\ldots,k-1$, the number of $t$-cliques increased by
at most $n^{t-2}$, and the number of vertices did not change. So,
in total, 
\begin{equation}
F(H')-F(H)\ge c_{k}\left(\frac{n}{2r}\right)^{k-2}-\sum_{t=2}^{k-1}|c_{t}|n^{t-2}>0\;,\label{eq:maxAddEdge}
\end{equation}
since the coefficients $c_{t}$ are fixed and $n\geq m_{0}$ is large
enough. This is a contradiction to the assumption that $H$ maximizes
$F$.\end{proof}
\begin{claim*}
The set $V_{0}$ does not contain any Type~1 vertex.\end{claim*}
\begin{proof}
Suppose that $v\in V_{0}$ is a Type~1 vertex. Let $g$ and $h$
be the two indices as in the definition of Type~1 in Definition~\ref{def:extremal}.
Since the average size of the sets $V_{i}$, $i\in[r]$, is $\frac{n-|V_{0}|}{r}$,
there is an index $j\in[r]$ so that $|V_{j}|<\frac{n}{r}$. We construct
a new graph $H'$ by deleting $v$ (and its incident edges) from $V_{0}$
and introducing a new vertex $w$ into the set $V_{j}$. We make $w$
adjacent to all the vertices in $\bigcup_{i\in[r]\setminus{j}}V_{i}$,
and to no other. The modified graph $H'$ is $(\eta,r)$-extremal. 

The vertex $w$ is contained in at least $\binom{r-1}{k-1}\lfloor(1-\frac{1}{30r}\eta)\frac{n}{r}\rfloor^{k-1}$
many $k$-cliques in $H'$. Indeed, we can choose an arbitrary $(k-1)$-element
set $\{p_{1},p_{2},\ldots,p_{k-1}\}\subset[r]\setminus j$, and this
choice gives us at least $\lfloor(1-\frac{1}{30r}\eta)\frac{n}{r}\rfloor^{k-1}$
choices of vertices $w_{1}\in V_{p_{1}},\ldots,w_{k-1}\in V_{p_{k-1}}$.
By the previous Claim, for any such choice $\{w,w_{1},\ldots,w_{k-1}\}$
is a clique.

Let us now upper-bound the number of $k$-cliques in $H$ containing
$v$. The number of cliques containing $v$ and some other vertex
of $V_{0}$ is at most $|V_{0}|\cdot n^{k-2}\leq\frac{1}{30r^{r}}\eta n^{k-1}$.
The number of $k$-cliques through $v$ and through a vertex from
the set $V_{g}$ is at most $2n^{k-2}$ by the definition of Type~1.
By Definition~\ref{def:extremal}\ref{enu:Dext3} and \ref{enu:Dext4},
if $k\geq3$ the number of cliques containing $v$ and at least two
vertices from a fixed $V_{i}$, $i\in[r]$, is at most $e_{2}(H[V_{i}])\cdot n^{k-3}\leq|V_{i}|\cdot n^{k-3}$.
Therefore the number of $k$-cliques that touch some of the sets $V_{i}$
in at least two vertices is upper bounded by $n^{k-2}$. It remains
to upper-bound the number of $k$-cliques in $H$ through $v$ that
contain no vertex from $(V_{0}\setminus\{v\})\cup V_{g}$, and that
intersect each of the sets $V_{i}$ in at most one vertex. Trivially,
this number is at most $\binom{r-1}{k-1}\lceil(1+\frac{1}{30r}\eta)\frac{n}{r}\rceil^{k-1}$.
However, the fact that $\deg(v,V_{h})\le(1-\frac{1}{2}\eta)|V_{h}|$
allows us to refine this upper-bound to 
\begin{eqnarray*}
 &  & \binom{r-2}{k-1}\lceil(1+\tfrac{1}{30r}\eta)\tfrac{n}{r}\rceil^{k-1}+\binom{r-2}{k-2}\lceil(1+\tfrac{1}{30r}\eta)\tfrac{n}{r}\rceil^{k-2}(1-\tfrac{1}{2}\eta)\lceil(1+\tfrac{1}{30r}\eta)\tfrac{n}{r}\rceil\\
 & = & \lceil(1+\tfrac{1}{30r}\eta)\tfrac{n}{r}\rceil^{k-1}\left(\binom{r-1}{k-1}-\tfrac{1}{2}\eta\cdot\binom{r-2}{k-2}\right)\\
 & = & \binom{r-1}{k-1}\lceil(1+\tfrac{1}{30r}\eta)\tfrac{n}{r}\rceil^{k-1}(1-\tfrac{\eta}{2}\cdot\tfrac{k-1}{r-1})\ .
\end{eqnarray*}
Putting these bounds together, we get 
\begin{eqnarray*}
 &  & e_{k}(H')-e_{k}(H)\\
 & \ge & \binom{r-1}{k-1}\lfloor(1-\tfrac{1}{30r}\eta)\tfrac{n}{r}\rfloor^{k-1}-\binom{r-1}{k-1}\lceil(1+\tfrac{1}{30r}\eta)\tfrac{n}{r}\rceil^{k-1}(1-\tfrac{\eta}{2}\cdot\tfrac{k-1}{r-1})\\
 &  & \qquad\qquad\qquad\qquad-\tfrac{1}{30r^{r}}\eta n^{k-1}-3n^{k-2}\ .
\end{eqnarray*}
Using the inequality $\lfloor(1-\frac{1}{30r}\eta)\frac{n}{r}\rfloor>\lceil(1+\frac{1}{30r}\eta)\frac{n}{r}\rceil(1-\frac{1}{10r}\eta)$
we can write 
\begin{eqnarray*}
 &  & e_{k}(H')-e_{k}(H)\\
 & > & \binom{r-1}{k-1}\lceil(1+\tfrac{1}{30r}\eta)\tfrac{n}{r}\rceil^{k-1}[(1-\tfrac{1}{10r}\eta)^{k-1}-1+\tfrac{\eta}{2}\cdot\tfrac{k-1}{r-1}]\\
 &  & \qquad\qquad\qquad\qquad-\tfrac{1}{30r^{r}}\eta n^{k-1}-3n^{k-2}\ .
\end{eqnarray*}
By Bernoulli's inequality the coefficient in the square brackets is
at least 
\[
1-\frac{k-1}{10r}\eta-1+\frac{\eta}{2}\cdot\frac{k-1}{r-1}>-\frac{k-1}{10r}\eta+\frac{k-1}{2r}\eta=\frac{2}{5}\cdot\frac{k-1}{r}\eta\ .
\]
That gives 
\begin{eqnarray*}
e_{k}(H')-e_{k}(H) & > & {r-1 \choose k-1}\frac{n^{k-1}}{r^{k-1}}\cdot\frac{2}{5}\cdot\frac{k-1}{r}\eta-\frac{1}{30r^{r}}\eta n^{k-1}-3n^{k-2}\\
 & = & \frac{1}{r^{r}}\eta n^{k-1}\left(\tfrac{2}{5}{r-1 \choose k-1}(k-1)r^{r-k}-\tfrac{1}{30}\right)-3n^{k-2}\\
 & \geq & \frac{1}{r^{r}}\eta n^{k-1}(\tfrac{2}{5}-\tfrac{1}{30})-3n^{k-2}=\tfrac{11}{30r^{r}}\eta n^{k-1}-3n^{k-2}\ .
\end{eqnarray*}

The number of cliques of size $t$ changed by at most $n^{t-1}$ for
$t=2,\ldots,k-1$. That implies 
\begin{equation}
F(H')-F(H)>\tfrac{11}{30r^{r}}\eta c_{k}n^{k-1}-3c_{k}n^{k-2}-\sum_{t=2}^{k-1}|c_{t}|n^{t-1}>0\label{eq:maxType1}
\end{equation}
 since $n\geq m_{0}$ is sufficiently large. That contradicts the
maximality of $H$ and proves the claim.\end{proof}
\begin{claim*}
The set $V_{0}$ does not contain any Type~2 vertex. \end{claim*}
\begin{proof}
We proceed similarly as in the previous case. Suppose that $v\in V_{0}$
is a Type~2 vertex. Let $g$ and $h$ be the two indices as in Definition~\ref{def:extremal}.
We delete $v$ from $V_{0}$ and introduce a new vertex $w$ in some
set $V_{j}$, $j\in[r]$ with $|V_{j}|<\frac{n}{r}$ which we make
adjacent to all the vertices in $\bigcup_{i\in[r]\setminus\{j\}}V_{i}$,
and to no other. Let $H'$ be the resulting $(\eta,r)$-extremal graph.
As before, the new vertex $w$ belongs to at least $\binom{r-1}{k-1}\lfloor(1-\frac{1}{30r}\eta)\tfrac{n}{r}\rfloor^{k-1}$
cliques of size $k$ in $H'$.

Next we upper-bound the number of cliques containing $v$ in $H$.
The number of $k$-cliques through $v$ and through a vertex from
the set $V_{g}\cup V_{h}\cup(V_{0}\setminus\{v\})$ is at most $(|V_{g}\cap N_{H}(v)|+|V_{h}\cap N_{H}(v)|+|V_{0}|)n^{k-2}\leq7\eta n^{k-1}$
by the definition of Type~2. The number of $k$-cliques through $v$
that touch at least two vertices in some $V_{i}$ is at most $n^{k-2}$,
as in the previous claim. Last, the number of $k$-cliques through
$v$ that do not intersect $V_{g}\cup V_{h}\cup(V_{0}\setminus\{v\})$
and contain at most one vertex from each $V_{i}$ is upper-bounded
by 
\begin{eqnarray*}
\binom{r-2}{k-1}\lceil(1+\tfrac{1}{30r}\eta)\tfrac{n}{r}\rceil^{k-1} & = & \frac{r-k}{r-1}\binom{r-1}{k-1}\lceil(1+\tfrac{1}{30r}\eta)\tfrac{n}{r}\rceil^{k-1}
\end{eqnarray*}
(in particular it must be $0$ when $k=r$). Proceeding as in the
proof of the previous claim we get 
\begin{eqnarray*}
 &  & e_{k}(H')-e_{k}(H)\\
 & \geq & \binom{r-1}{k-1}\lfloor(1-\tfrac{1}{30r}\eta)\tfrac{n}{r}\rfloor^{k-1}-\tfrac{r-k}{r-1}\binom{r-1}{k-1}\lceil(1+\tfrac{1}{30r}\eta)\tfrac{n}{r}\rceil^{k-1}-7\eta n^{k-1}-n^{k-2}\\
 & \geq & \binom{r-1}{k-1}\lceil(1+\tfrac{1}{30r}\eta)\tfrac{n}{r}\rceil^{k-1}[(1-\tfrac{1}{10r}\eta)^{k-1}-\tfrac{r-k}{r-1}]-7\eta n^{k-1}-n^{k-2}.
\end{eqnarray*}
The expression in the square brackets is at least 
\[
1-\frac{k-1}{10r}\eta-1+\frac{k-1}{r-1}>(k-1)\left(\frac{1}{r}-\frac{1}{10r}\eta\right)>\frac{9(k-1)}{10r}\geq\frac{9}{10r}\ .
\]
Hence we get 
\begin{eqnarray*}
e_{k}(H')-e_{k}(H) & \geq & n^{k-1}\left(\frac{9}{10r^{k}}\binom{r-1}{k-1}-7\eta\right)-n^{k-2}>\frac{1}{3r^{r}}n^{k-1}-n^{k-2}\ ,
\end{eqnarray*}
where we used $7\eta\leq\frac{1}{2r^{r}}$, and finally 
\begin{equation}
F(H')-F(H)>\frac{1}{3r^{r}}c_{k}n^{k-1}-c_{k}n^{k-2}-\sum_{t=2}^{k-1}|c_{t}|n^{t-1}>0\;,\label{eq:maxType2}
\end{equation}
because $n\geq m_{0}$.
\end{proof}
Thus, by the three claims above, the vertex set of $H$ is partitioned
into sets $V_{1},\ldots,V_{r}$, all pairs of which form complete
bipartite graphs. Recall that the graphs $H[V_{i}]$ are triangle-free
and of maximum degree at most~2. 
\begin{claim*}
For each $i\in[r]$, we have $e_{2}(H[V_{i}])=|V_{i}|$.\end{claim*}
\begin{proof}
The condition that the maximum degree of $H[V_{i}]$ is at most~2
implies that $e_{2}(H[V_{i}])\le|V_{i}|$. Suppose now that $e_{2}(H[V_{i}])<|V_{i}|$.
We replace the subgraph $H[V_{i}]$ with the graph consisting of a
path with $e_{2}(H[V_{i}])$ edges followed by $|V_{i}|-e_{2}(H[V_{i}])-1$
isolated vertices. Let $H'$ be the resulting graph. Note that $H'$
is $(\eta,r)$-extremal, and since $H[V_{i}]$ was triangle-free we
have $e_{\ell}(H')=e_{\ell}(H)$ for all $\ell$. Next, we create
$H''$ by adding one edge to $H'[V_{i}]$, so that we get a longer
path or a cycle. We still have that $H''$ is $(\eta,r)$-extremal. 

The number of $k$-cliques increased from $H'$ to $H''$ by at least
$\binom{r-1}{k-2}\lfloor(1-\frac{1}{30r}\eta)\tfrac{n}{r}\rfloor^{k-2}\geq(\frac{n}{2r})^{k-2}$.
At the same time, the total number of cliques of order $t=2,\ldots,k-1$
increased by at most $n^{t-2}$. Hence 
\begin{equation}
F(H'')-F(H)=F(H'')-F(H')\ge\left(\frac{n}{2r}\right){}^{k-2}-\sum_{t=2}^{k-1}|c_{t}|n^{t-2}>0\label{eq:maxCycles}
\end{equation}
for $n\geq m_{0}$, a contradiction to the supposed maximality of
$H$.\end{proof}
\begin{claim*}
For each $i,j\in[r]$, $i\neq j$, we have $|V_{i}|-1\le|V_{j}|\le|V_{i}|+1$.\end{claim*}
\begin{proof}
Consider the class of all $(\eta,r)$-extremal graphs $G$ with sufficiently
many vertices partitioned into sets $V(G)=V_{0}(G)\sqcup V_{1}(G)\sqcup\ldots\sqcup V_{r}(G)$
satisfying all the previous claims, i.e. $V_{0}(G)=\emptyset$, $G[V_{i}(G),V_{j}(G)]$
is complete bipartite for $i,j\in[r],\ i\neq j$, each $G[V_{i}(G)]$
is triangle-free and $e_{2}(G[V_{i}(G)])=|V_{i}(G)|$ for $i\in[r]$.
Let $\sigma_{j}(x_{1},\ldots,x_{r})=\sum_{1\leq i_{1}<\cdots<i_{j}\leq r}x_{i_{1}}\cdots x_{i_{j}}$
denote the $j$-th elementary symmetric polynomial in $r$ variables.
For a graph $G$ in the above class we have 
\begin{equation}
e_{\ell}(G)=\sum_{j=0}^{\ell}\tbinom{j}{\ell-j}\;\sigma_{j}\left(|V_{1}(G)|,\ldots,|V_{r}(G)|\right),\quad\ell=0,\ldots,2r.\label{eq:MichaelJacksonIsMyBigHero}
\end{equation}
To see this, let us write $n(i)=|V_{i}(G)|$. Let us label arbitrarily
the vertices in each set $V_{i}(G)$ as $v_{1}^{i},v_{2}^{i},\ldots,v_{n(i)}^{i}$,
and the edges in $V_{i}(G)$ as $e_{1}^{i},e_{2}^{i},\ldots,e_{n(i)}^{i}$.
Consider any $j$ with $0\le j\le\ell$. Then it is enough to argue
that the summand $\tbinom{j}{\ell-j}\:\sigma_{j}\left(|V_{1}(G)|,\ldots,|V_{r}(G)|\right)$
counts the $\ell$-cliques that intersect exactly $j$ of the sets
$V_{1}(G),\ldots,V_{r}(G)$. Note that such a clique touches exactly
$\ell-j$ sets in an edge and the remaining ones in one vertex. The
quantity $\sigma_{j}\left(|V_{1}(G)|,\ldots,|V_{r}(G)|\right)$ counts
the number of ways of selecting $j$ vertices in $V_{1}(G)\sqcup\ldots\sqcup V_{r}(G)$
so that each set $V_{i}(G)$ contains at most one selected vertex.
The choices represented by the binomial coefficient $\tbinom{j}{\ell-j}$
then indicate which of the selected vertices $v_{h}^{i}$ should be
replaced by edges $e_{h}^{i}$ (the indices in $v_{h}^{i}$ and $e_{h}^{i}$
are the same) so to get one desired $\ell$-clique. This establishes~(\ref{eq:MichaelJacksonIsMyBigHero}).

It follows that there are constants $c_{k}',\ldots,c_{0}'$ depending
only on $F$ and $r$ such that $c_{k}'=c_{k}>0$ and $F(G)=\sum_{i=0}^{k}c_{i}'\sigma_{i}(|V_{1}(G)|,\ldots,|V_{r}(G)|)$
for all graphs $G$ in the class. Now suppose, without loss of generality,
that in the maximizer $H$ we have $|V_{1}(H)|-|V_{2}(H)|\geq2$.
Take any graph $H'$ in the same class with parts of size $(|V_{1}|-1,|V_{2}|+1,|V_{3}|,\ldots,|V_{r}|)$.
For any numbers $x_{1},\ldots,x_{r}$ have 
\[
\sigma_{j}(x_{1}-1,x_{2}+1,x_{3},\ldots,x_{r})-\sigma_{j}(x_{1},\ldots,x_{r})=(x_{1}-x_{2}-1)\sigma_{j-2}(x_{3},\ldots,x_{r})
\]
 and hence 
\begin{eqnarray}
F(H')-F(H) & = & (|V_{1}|-|V_{2}|-1)\sum_{i=2}^{k}c_{i}'\sigma_{i-2}(|V_{3}|,\ldots,|V_{r}|)\nonumber \\
 & \geq & c_{k}'{r-2 \choose k-2}\left(\frac{n}{2r}\right){}^{k-2}-\sum_{t=2}^{k-1}|c_{t}'|{r-2 \choose t-2}\lceil(1+\tfrac{1}{30r}\eta)\frac{n}{r}\rceil^{t-2}>0\label{eq:maxEqualParts}
\end{eqnarray}
for sufficiently large $n\geq m_{0}$, again a contradiction to the
maximality of $H$. 
\end{proof}
The claims above clearly prove the lemma.
\end{proof}

\section{Proof of the main theorem\label{sec:Proof}}

We can now prove Theorem~\ref{thm:MainGraphTheory}. Fix $r\geq k\geq2$
and a clique function $F(G)=\sum_{i=0}^{k}c_{i}e_{i}(G)$ with $c_{k}>0$. 

Let $\eta=\frac{1}{14r^{r}}$ and $m_{0}$ be the constants provided
by Lemma~\ref{lem:optimizeExtremal} given $r$ and $F$. Let $\epsilon=\frac{\eta^{2}}{120r^{r+3}}$
be the constant provided by Lemma~\ref{lem:obtainextremal-1} given
$r$ and $\eta$.

Let $\delta$ be the constant from Theorem~\ref{thm:ErdSimStability}
for input parameters $r$ and $\frac{1}{2}\epsilon$. Define $\alpha=\min\{\frac{1}{4}{r \choose k}\frac{1}{r^{k}}\delta,\frac{1}{2}\epsilon\}$
and let $\beta$ be the constant from Theorem~\ref{thm:Removal}
for input $r$ and $\alpha$. Let $m_{1}$ be such that for $n\geq m_{1}$
we have $(1-\frac{1}{4}\delta)F(T_{r}(n))<F(J_{r}(n))$. Let $m_{2}$
be such that for each $n\geq m_{2}$ and each $n$-vertex graph $G$
the condition $F(G)>(1-\frac{1}{4}\delta)F(T_{r}(n))$ implies $e_{k}(G)>(1-\frac{1}{2}\delta)e_{k}(T_{r}(n))$.
The existence of $m_{1}$ and $m_{2}$ follows by observing that for
$n$-vertex graphs $G$ we have $F(G)=c_{k}e_{k}(G)+O(n^{k-1})$ and
moreover $F(T_{r}(n))=c_{k}{r \choose k}\left(\frac{n}{r}\right)^{k}+O(n^{k-1})$,
$F(J_{r}(n))=c_{k}{r \choose k}\left(\frac{n}{r}\right)^{k}+O(n^{k-1})$.
Finally let $C_{r}$ be the constant from Lemma~\ref{lem:middleDS}.
We claim that Theorem~\ref{thm:MainGraphTheory} holds for $n_{0}=\max\{m_{0},m_{1},m_{2},C_{r}\beta^{-1},2r\eta^{-1}\}$.

Suppose $H$ is any graph with $n\geq n_{0}$ vertices which triangulates
a homology $(2r-1)$-manifold. First, suppose that $F(H)\leq(1-\frac{1}{4}\delta)F(T_{r}(n))$.
Since $n\geq m_{1}$ this implies $F(H)<F(J_{r}(n))$, and the result
is proved (in that case, equality is impossible).

That leaves us with the case $F(H)>(1-\frac{1}{4}\delta)F(T_{r}(n))$.
Since $n\geq m_{2}$ we get $e_{k}(H)>(1-\frac{1}{2}\delta)e_{k}(T_{r}(n))$.
By Lemma~\ref{lem:middleDS} we have $e_{r+1}(H)\leq C_{r}n^{r}\leq\beta n^{r+1}$.
Theorem~\ref{thm:Removal} now shows that we can remove at most $\alpha n^{2}$
edges from $H$ to obtain a $K_{r+1}$-free subgraph $G$ with the
same vertex set. The removal of one edge destroys at most $n^{k-2}$
cliques of size $k$, therefore 
\[
e_{k}(G)\geq e_{k}(H)-\alpha n^{k}\geq(1-\tfrac{1}{2}\delta)e_{k}(T_{r}(n))-\frac{1}{4}\delta{r \choose k}\frac{1}{r^{k}}n^{k}\geq(1-\delta)e_{k}(T_{r}(n)),
\]
where in the last step we used $e_{k}(T_{r}(n))\geq\frac{1}{2}{r \choose k}\frac{n^{k}}{r^{k}}$.
Theorem~\ref{thm:Zykov} now gives $e_{2}(G)\geq(1-\delta)e_{2}(T_{r}(n))$.
By Theorem~\ref{thm:ErdSimStability} the graph $G$ is $\frac{1}{2}\epsilon$-close
to $T_{r}(n)$. Since $H$ arises from $G$ by adding at most $\alpha n^{2}\leq\frac{1}{2}\epsilon n^{2}$
edges, we conclude that $H$ is $\epsilon$-close to $T_{r}(n)$.
From Lemma~\ref{lem:obtainextremal-1}, we have that $H$ is $(\eta,r)$-extremal.
As $n\geq m_{0}$, Lemma~\ref{lem:optimizeExtremal} now shows that
$F(H)\leq F(J_{r}(n))$. That ends the proof of the inequality.

If $F(H)=F(J_{r}(n))$ then by Lemma~\ref{lem:optimizeExtremal}
the graph $H$ is $r$-radical. Since $H$ triangulates a homology
$(2r-1)$-manifold, Lemma~\ref{lem:radicalmanifold} yields that
$H$ is isomorphic to $J_{r}(n)$. That proves the uniqueness part.

\section{Conjectures\label{sec:Conjectures}}

The number $n_{0}$ coming from our proof of Theorem~\ref{thm:Main}
is enormous. Indeed, the bottleneck is the use of the Removal lemma
(Theorem \ref{thm:Removal}). Using the best known bound for the Removal
lemma,~\cite{Fox:Removal}, would yield $n_{0}$ a tower of height
$\Omega(\log~d)$, 
\[
n_{0}=2^{\left.2^{\cdots^{2}}\right\} \Omega(\log~d)}\;.
\]
Further, the hidden constant in $\Omega(\log~d)$ is quite large,
resulting in a huge $n_{0}$ already for $d=4$.

It is natural to expect that the conclusion of Corollary~\ref{cor:fhgamma}
holds for flag triangulations of any size, not just sufficiently large.
For $(d-1)$-dimensional flag spheres with $d$ even this was conjectured
in~\cite{nevolutz}, and very recently confirmed when $d=4$ and
when $d=6,\,F=f_{1}$ by Zheng~\cite{Zheng:UpperBound35}. Her proof
uses very different methods from ours.

We conjecture that the extremum in Theorem~\ref{thm:Main} is stable,
in the sense that if $F(M)$ is sufficiently close to $F(\mathbf{J}_{\frac{d}{2}}(n))$
then $M$ is still a join of cycles of total length $n$ and of individual
lengths close to $\frac{n}{d/2}$, but not necessarily all equal (see
also \cite[Conjecture 5.1]{AdaHla:DenseFlagTriangulations}).

As mentioned in the introduction, Gal~\cite{SRGal} conjectures that
for flag triangulations of homology spheres $M$ the $\gamma$-vector
$\gamma(M)$ is non-negative. This is known to be true in a number
of special cases (see~\cite{karu2006cd,nevo2011gamma,nevo2011vector}
and the references therein). One method of showing non-negativity
is to exhibit a simplicial complex of which $\gamma(M)$ is the $f$-vector.
In particular, Nevo and Petersen~\cite[Problem 6.4]{nevo2011gamma}
asked if for every $n$-vertex flag triangulation of a homology sphere
$M$ there exists a graph $G$ such that the $\gamma$-vector of $M$
is the clique vector of $G$. Our result $\gamma_{i}(M)\leq\gamma_{i}(\mathbf{J}_{r}(n))$
supports this claim in odd dimension $2r-1$ for large $n$. Indeed,
one checks that $\gamma_{1}(M)=n-4r$ and $\gamma_{i}(\mathbf{J}_{r}(n))=e_{i}(T_{r}(n-4r))$.
If the conjectural graph $G$ exists, then it is $K_{r+1}$-free,
has $n-4r$ vertices, and thus by Zykov's theorem 
\begin{equation}
\gamma_{i}(M)=e_{i}(G)\leq e_{i}(T_{r}(n-4r))=\gamma_{i}(\mathbf{J}_{r}(n)),\label{eq:zzz1}
\end{equation}
which is what we showed. 

Having proved that $F(M)\leq F(J_{r}(n))$ it is tempting to conjecture,
for the classical enumeration vectors (i.e. the $f$-, $h$-, $g$-
or $\gamma$-vector), a generalization in the spirit of Theorem~\ref{thm:Zykov}.
We pose this as an open problem. 
\begin{problem}
\label{prob:bigZykov}Let $(v_{1},\ldots,v_{r})$ be any of $(f_{0},\ldots,f_{r-1})$,
$(h_{1},\ldots,h_{r})$, $(g_{1},\ldots,g_{r})$ or $(\gamma_{1},\ldots,\gamma_{r})$.
Is it true that for sufficiently large $n$ the inequalities 
\begin{equation}
1=\frac{v_{1}(M)}{v_{1}(\mathbf{J}_{r}(n))}\ge\frac{v_{2}(M)}{v_{2}(\mathbf{J}_{r}(n))}\ge\cdots\ge\frac{v_{r}(M)}{v_{r}(\mathbf{J}_{r}(n))}\;\label{eq:bigZykov}
\end{equation}
hold for any flag triangulation of a homology $(2r-1)$-manifold (or
sphere) $M$ with $n$ vertices? 

If $v_{i}=\gamma_{i}$ and $M$ is a homology $(2r-1)$-sphere the
positive answer to Problem~\ref{prob:bigZykov} would follow directly
from the conjecture of Nevo and Petersen mentioned earlier. In this
case the inequalities~(\ref{eq:bigZykov}) are equivalent to those
of Theorem~\ref{thm:Zykov} for some graph $G$ such that $\gamma(M)=e_{2}(G)$.
\end{problem}
It is very likely that $\mathbf{J}_{n}(r)$ is the maximizer of face
numbers for a wider class of $(2r-1)$-dimensional \emph{flag weak
pseudomanifolds}. A weak pseudomanifold of dimension $d-1$ is a pure
$(d-1)$-dimensional simplicial complex in which every face of dimension
$d-2$ belongs to exactly two maximal faces. For $X(G)$ the condition
translates to saying that every maximal clique in $G$ has size $d$
and for every clique $\sigma$ of size $d-1$ the link $\mathrm{lk}_{G}(\sigma)$
consists of two isolated vertices. 
\begin{problem}
Let $r\geq2$. Is it true that:
\begin{enumerate}[label=(\emph{\roman*})]
\item \label{enu:P18-1}for every $n$-vertex flag weak $(2r-1)$-pseudomanifold
$M$ with $n$ sufficiently large we have $f_{i}(M)\leq f_{i}(\mathbf{J}_{r}(n))$
for $i=1,\ldots2r-1$? This is open even for $i=1$. 
\item \label{enu:P18-2}for every $\beta>0$ there is a constant $n_{0}$
such that for every flag weak $(2r-1)$-pseudomanifold $M$ with $n\geq n_{0}$
vertices we have $f_{r}(M)\leq\beta n^{r+1}$ ?
\end{enumerate}
\end{problem}
Recall that for homology manifolds condition~\ref{enu:P18-2}, guaranteed
by Lemma~\ref{lem:middleDS} and ultimately by the middle Dehn--Sommerville
equation, is the weakest possible assumption which allows us to initiate
the stability method for dense graphs. The first author proved in
~\cite{Ada:UpperBound} that for families of flag weak $(2r-1)$-pseudomanifolds
which satisfy a stronger condition $f_{r}(M)\leq Cn^{r}$ for some
fixed $C$ we have $f_{1}(M)\leq f_{1}(\mathbf{J}_{r}(n))$ for sufficiently
large $n$.

In even dimensions the situation seems to be more complicated. For
$r\geq1$ let $J_{r}^{*}(n)$ be the graph obtained from $J_{r}(n-2)$
by adding two new vertices adjacent to all of $J_{r}(n-2)$. Then
the clique complex $\mathbf{J}_{r}^{*}(n):=X(J_{r}^{*}(n))$ is a
flag simplicial $2r$-sphere.
\begin{conjecture}
\label{conj:even}Fix $r\geq1$. For every flag triangulation of a
homology $2r$-sphere $M$ with $n$ vertices we have $f_{i}(M)\le f_{i}(\mathbf{J}_{r}^{*}(n))$
for $i=0,\ldots,2r$ and $\gamma_{i}(M)\leq\gamma_{i}(\mathbf{J}_{r}^{*}(n))$
for $i=0,\ldots,r$.
\end{conjecture}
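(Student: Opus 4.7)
My plan is to mimic the overall strategy of the proof of Theorem~\ref{thm:MainGraphTheory}, inserting one genuinely new ingredient dictated by the shape of the conjectured maximizer: whereas $\mathbf{J}_r(n)$ is a vertex-homogeneous balanced join, $\mathbf{J}_r^*(n)=\Sigma\mathbf{J}_r(n-2)$ is a suspension with two distinguished apex vertices of degree $n-2$. On top of the usual stability argument, an \emph{apex-detection} step is needed to single out these two vertices.

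The preliminary reductions should go through with minor changes. The even-dimensional analog of Lemma~\ref{lem:middleDS} follows from the Dehn--Sommerville relation $h_{r+1}=h_r$ (valid in dimension $d-1=2r$, i.e.\ $d=2r+1$ odd), which expresses $e_{r+1}(G)=f_r(X(G))$ as a linear combination of $e_0(G),\ldots,e_r(G)$ and therefore gives $e_{r+1}(G)=O(n^r)$. Lemma~\ref{lem:vankampen} applied with $r+1$ in place of $r$ yields that $G$ is $K_3^{r+1}$-free. Combining these two facts with the Removal lemma, Zykov's inequalities, and the Erd\H{o}s--Simonovits stability theorem (exactly as in the proof of Theorem~\ref{thm:MainGraphTheory}) reduces the problem to the case where, after deleting $o(n^2)$ edges from $G$, we obtain a $K_{r+1}$-free graph $G'$ that is $\epsilon$-close to $T_r(n)$ in edit distance for any prescribed $\epsilon>0$.

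The main structural step is to upgrade this near-Tur\'an structure into the suspension structure of $\mathbf{J}_r^*(n)$. I would introduce an even-dimensional analog of $(\eta,r)$-extremality that allows two distinguished non-adjacent vertices $u,v$ with $\deg(u),\deg(v)\ge(1-\eta)n$, together with a partition of the remaining vertices into $V_0\sqcup V_1\sqcup\cdots\sqcup V_r$ satisfying conditions analogous to Definition~\ref{def:extremal}. The analog of Lemma~\ref{lem:obtainextremal-1} would apply Lemma~\ref{lem:vankampen} to the links of various faces of $G$ to prove that a graph triangulating a homology $2r$-sphere and $\epsilon$-close to $T_r(n)$ necessarily realizes this structure; the two apex vertices should emerge as the unique non-adjacent pair whose common neighborhood contains almost all the edges of $G$. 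An exchange argument in the spirit of Lemma~\ref{lem:optimizeExtremal} then shows that a maximizer $G$ must in fact be the exact join $\{u,v\}*G''$ with $u,v$ non-adjacent universal vertices and $G''$ triangulating a homology $(2r-1)$-sphere, at which point applying Theorem~\ref{thm:Main} to $G''$ finishes both the inequality and the uniqueness statement.

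The chief obstacle is the apex-detection step. In the odd-dimensional case the maximizer is vertex-homogeneous, so every vertex-link is itself extremal for the relevant upper bound, and stability of individual links transfers directly to a global partition. In the even-dimensional case the maximizer $\mathbf{J}_r^*(n)$ is sharply inhomogeneous: the two apex links are copies of $\mathbf{J}_r(n-2)$ (extremal for Theorem~\ref{thm:Main}), but every other link is a double suspension $E_2*\mathbf{J}_{r-1}(\cdot)*E_2$ whose face numbers are strictly smaller than those of the corresponding $\mathbf{J}_r(\deg v)$. A naive averaging of Theorem~\ref{thm:Main} over all vertex links therefore forfeits a constant factor and cannot pick out the two apex vertices from the rest. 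Finding the right local invariant that breaks this symmetry, presumably through a joint analysis of non-adjacent vertex pairs $\{u,v\}$ exploiting that $\mathrm{lk}(u)\cap\mathrm{lk}(v)$ is unusually large in the extremal case, is the principal technical hurdle; the first non-trivial instance $r=2$ of flag $4$-spheres is already open and is the natural testing ground.
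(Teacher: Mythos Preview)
The statement you are attempting to prove is presented in the paper as a \emph{conjecture}, not a theorem; the paper offers no proof. It records only that the case $r=1$ is trivial (all flag $2$-spheres with $n$ vertices have the same $f$-vector) and that the case $r=2$ is already a theorem of Gal~\cite[Theorem~3.1.3]{SRGal}. So your closing remark that ``the first non-trivial instance $r=2$ of flag $4$-spheres is already open'' is incorrect: that case is settled, and the first open case is $r=3$.

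More importantly, your strategy contains a structural error that the paper explicitly flags. You aim to show that any maximizer $G$ ``must in fact be the exact join $\{u,v\}*G''$'' and then invoke Theorem~\ref{thm:Main} on $G''$ to obtain both the inequality and uniqueness. But the paper observes (immediately after the conjecture) that uniqueness \emph{fails}: starting from $\mathbf{J}_r^*(n)$, one may replace the induced subgraph on $V_i\cup\{a,b\}$ (a suspended cycle, hence a flag $S^2$) by any other flag triangulation of $S^2$ on the same vertex set, obtaining a flag $2r$-sphere with identical face numbers. In such an example the vertices $a,b$ are in general no longer adjacent to all of $V_i$, so the resulting maximizer is \emph{not} a suspension $\{u,v\}*G''$ at all. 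Hence the ``apex-detection'' step, as you formulate it (forcing the maximizer to be an exact suspension), cannot succeed: there are maximizers with no pair of near-universal vertices to detect. Any approach along these lines would have to aim for a weaker structural conclusion that still suffices for the inequality while being compatible with this non-uniqueness.

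Your preliminary reductions (Dehn--Sommerville gives $e_{r+1}(G)=O(n^r)$ via $h_{r+1}=h_r$; Removal plus Zykov plus Erd\H{o}s--Simonovits force $\epsilon$-closeness to $T_r(n)$) are sound and would indeed reduce the conjecture, for large $n$, to an analysis of graphs $\epsilon$-close to $T_r(n)$. The genuine difficulty is exactly the one you identify, but it is harder than you suggest, since the target class of maximizers is strictly larger than $\{\mathbf{J}_r^*(n)\}$.
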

This statement is obviously true when $r=1$, since $(f_{0}(M),f_{1}(M),f_{2}(M))=(n,2n-4,3n-6)$
and all the inequalities are equalities for every $M$. It is also
true for $r=2$ \cite[Theorem 3.1.3]{SRGal}. As in the odd-dimensional
case, the conjecture is a special case of \cite[Problem 6.4]{nevo2011gamma}.
Indeed, repeating the argument leading to (\ref{eq:zzz1}) would now
yield 
\[
\gamma_{i}(M)=e_{i}(G)\leq e_{i}(T_{r}(n-4r-2))=\gamma_{i}(\mathbf{J}_{r}(n-2))=\gamma_{i}(\mathbf{J}_{r}^{*}(n)).
\]

Conjecture \ref{conj:even} (if true) cannot be augmented by a uniqueness
statement. To see this, consider the subgraph of $J_{r}^{*}(n)$ induced
by $V_{i}\cup\{a,b\}$, where $V_{i}$ is any part of $V(J_{r}(n-2))=V_{1}\sqcup\cdots\sqcup V_{r}$
and $a,b$ are the two additional vertices. It is a flag triangulation
of $S^{2}$ as the suspension of a cycle. Upon replacing this subgraph
by any other flag triangulation of $S^{2}$ with the same number of
vertices one gets a flag simplicial $2r$-sphere with the same face
numbers as $\mathbf{J}_{r}^{*}(n)$.

\section*{Acknowledgments}

We thank Eran Nevo, Günter Ziegler and an anonymous referee for helpful
comments. This project was carried out while JH was visiting the University
of Copenhagen, and he thanks the Department of Mathematical Sciences
for their hospitality. 

The contents of this publication reflects only the authors' views
and not necessarily the views of the European Commission of the European
Union.

\bibliographystyle{plain}
\bibliography{bibliography}

\end{document}